\long\def\symbolfootnote[#1]#2{\begingroup
	\def\thefootnote{\fnsymbol{footnote}}\footnote[#1]{#2}\endgroup}
\newcommand{\Z}{\mathbb Z}
\newcommand{\Q}{\mathbb Q}
\newcommand{\R}{\mathbb R}
\def\imod#1{\allowbreak\mkern10mu({\operator@font mod}\,\,#1)}
\newtheorem{theorem}{Theorem}
\newtheorem{lemma}[theorem]{Lemma}
\newtheorem{corollary}[theorem]{Corollary}
\newtheorem{proposition}[theorem]{Proposition}
\newtheorem*{theorem*}{Theorem}
\theoremstyle{definition}
\newtheorem{defn}[theorem]{Definition}
\numberwithin{equation}{section}
\title[On the growth of cuspidal cohomology of ${\rm GL}_4$]{On the growth of cuspidal cohomology of ${\rm GL}_4$}
\date{\today}
\author{\bf Chandrasheel Bhagwat \ \& \ Sudipa Mondal}
\address{Indian Institute of Science Education and Research, Dr.\,Homi Bhabha Road, Pashan, Pune 411008,  INDIA.}
\email{cbhagwat@iiserpune.ac.in, \ sudipa.mondal123@gmail.com}
\subjclass[2020]{11F41, 11F75}
\begin{document}

\begin{abstract}
In this article, we establish an asymptotic estimate on the number of cuspidal automorphic representations of ${\rm GL}_4(\mathbb A_{\mathbb Q})$ which contribute to the cuspidal cohomology of ${\rm GL}_4$ and are obtained from symmetric cube transfer of automorphic representations of  ${\rm GL}_2(\mathbb A_{\mathbb Q})$ of a given weight and with varying level structure. This generalises the recent work of C. Ambi about the similar problem for ${\rm GL}_3$.
\end{abstract}

\maketitle
	
\section{Introduction}
	Let $d$ be a positive square free integer and $\mathbb{E}=\Q(\sqrt{-d})$ be an imaginary quadratic extension of $\Q$. Let $\chi$ be a gr\"o{\ss}encharacter of the group of id\`eles over the field $\mathbb{E}$. Using Langlands functoriality and automorphic induction, it gives an automorphic cuspidal representation of ${\rm GL}_2(\mathbb{A}_\Q)$. The relation between the automorphic representations of ${\rm GL}_2(\mathbb{A}_\Q)$ and the cuspidal  cohomology of ${\rm GL}_2$  can be described in terms of the sheaf $\widetilde{ \mathcal M}_\mu$ associated to the highest weight representation $\mathcal M_\mu$ of $\rm GL(2,\R)$. This generalises to other classical groups. The first author and A. Raghuram studied the classification problem for the non-vanishing of cuspidal  cohomology of classical groups in \cite{bhagwat}.\smallskip
	
       There are important questions about estimating the dimensions of cuspidal cohomology. In \cite{Calegari-Emerton}, Calegari and Emerton have established upper bounds on multiplicities of cohomological representations of semisimple  real Lie groups. In \cite{marshall}, Marshall has studied the growth of cohomology with respect to growing weight of automorphic representations of ${\rm GL}_2$ over a number field which is not totally real.\smallskip

		 One can ask a related but different question: how much of cuspidal cohomology of  ${\rm GL}_2$ is obtained by automorphic induction of gr\"o{\ss}encharacters.  In a recent work, C. Ambi  (see \cite{ambi}) has proved  results about estimates of the growth of the cuspidal cohomology of ${\rm GL}_2$ and ${\rm GL}_3$ in terms of symmetric square transfer from ${\rm GL}_2$ (both lower and upper bounds).\smallskip

	One can ask a similar question for the group ${\rm GL}_4$, i.e. how much of the cuspidal cohomology of ${\rm GL}_4$ is obtained from ${\rm GL}_2$ by symmetric cube transfer. Kim and Shahidi \cite{kim-shahidi} have proved that the symmetric cube of a cuspidal automorphic representation $\pi$ of ${\rm GL}_2(\mathbb{A}_\Q)$ gives an automorphic representation of ${\rm GL}_4(\mathbb{A}_\Q)$ which is further cuspidal if $\pi$  is not dihedral i.e., it is not obtained by automorphic induction from a gr\"o{\ss}encharacter. The following is the main result of the present paper which gives an estimate of number of cuspidal automorphic representations of ${\rm GL}_4(\mathbb{A}_\Q)$ obtained from ${\rm GL}_2(\mathbb{A}_\Q)$ by symmetric cube transfer corresponding to a specific level structure. \medskip
	
	\begin{theorem}\label{main-theorem}
		Let $k\geq 2$ be an even integer and $p\geq 3$ be a prime. Let $E_k(p^n)$ denote the set of cuspidal automorphic representations of ${\rm GL}_4(\mathbb{A}_\Q)$ corresponding to the level structure $K_f^4(p^n)$ which are obtained by symmetric cube transfer of cuspidal automorphic representations of ${\rm GL}_2(\mathbb{A}_\Q)$ of weight $\left( \dfrac{k}{2}-1, 1- \dfrac{k}{2}\right)$. Then,
		$$ |E_k(p^{n})| ~\gg_k~ p^{n-1} \hspace{1 cm} \text{as} \hspace{.2 cm} n \longrightarrow \infty$$
		where the implied constant depends on $k$. 
	\end{theorem}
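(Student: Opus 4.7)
The strategy is to mirror Ambi's approach for $\mathrm{GL}_3$~\cite{ambi}, replacing $\mathrm{Sym}^2$ by $\mathrm{Sym}^3$ throughout. The four steps are: (i) identify which cuspidal automorphic representations of $\mathrm{GL}_2(\mathbb{A}_\Q)$ of the prescribed infinity type have $\mathrm{Sym}^3$ transfer matching the level $K_f^4(p^n)$; (ii) count these source forms via dimension formulas for classical cusp forms; (iii) discard the dihedral ones, which by Kim--Shahidi~\cite{kim-shahidi} are the only obstruction to cuspidality of $\mathrm{Sym}^3\pi$; and (iv) quotient by the (bounded) fibres of $\mathrm{Sym}^3$.

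For (i), the infinity type $(k/2-1,\,1-k/2)$ picks out exactly the discrete series of $\mathrm{GL}_2(\R)$ whose classical avatars are holomorphic newforms of weight $k$, so the source lives in $S_k^{\mathrm{new}}$ of an appropriate level. At finite places I would require $\pi_\ell$ unramified for $\ell\neq p$, and at $p$ restrict to a clean local family -- most transparently the ramified principal series $\pi_p = \pi(\chi_p,1)$ with $\chi_p$ a character of $\Q_p^\times$ of a specific conductor determined by the convention defining $K_f^4(p^n)$. For such $\pi_p$ the local Langlands parameter of $\mathrm{Sym}^3\pi_p$ is the isobaric sum of characters $\chi_p^3, \chi_p^2, \chi_p, 1$, whose conductor is an explicit power of $p$ (using $p \geq 3$ so that cubing is injective on wildly-ramified characters); the level-matching condition at $p$ then becomes a concrete constraint on the conductor of $\chi_p$.

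For (ii) the number of such ramified characters is $\varphi(p^n) = p^{n-1}(p-1)$, and the dimension formula for holomorphic cusp forms of weight $k$, level $p^n$ and the required nebentypus contributes $\gg_k 1$ per character, yielding $\gg_k p^{n-1}$ source forms. For (iii) the dihedral subfamily consists of CM forms arising from gr\"o{\ss}encharacters of imaginary quadratic fields ramified only at $p$ and at $\infty$; their count is $O_k(n)$, negligible against $p^{n-1}$. For (iv), if $\mathrm{Sym}^3\pi \cong \mathrm{Sym}^3\pi'$ then by strong multiplicity one and the analysis of the kernel of $\mathrm{Sym}^3$ on $\mathrm{GL}_2(\mathbb{C})$ one has $\pi' \cong \pi\otimes\eta$ for a Hecke character $\eta$ with $\eta^3 = 1$; only finitely many such $\eta$ are compatible with a fixed level, so each fibre has size $O_k(1)$ and the bound $\gg_k p^{n-1}$ passes directly to $|E_k(p^n)|$.

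The principal obstacle is step (i): pinning down the exact local conductor of $\mathrm{Sym}^3\pi_p$ and matching it to the $p$-component of $K_f^4(p^n)$. For ramified principal series this is a direct calculation with the isobaric sum above, but Steinberg and supercuspidal $\pi_p$ give a more intricate conductor relation. Since only a lower bound is sought, it is enough to isolate the single ramified-principal-series family at $p$ where the conductor computation is transparent and verify that its $\mathrm{Sym}^3$-orbit already has size $\gg_k p^{n-1}$; the remaining local types can be ignored without affecting the asymptotic.
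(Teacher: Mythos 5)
Your plan diverges from the paper's at the crucial local step, and the divergence costs you the exponent. You restrict at $p$ to ramified principal series $\pi_p = \pi(\chi_p, 1)$, whose $\mathrm{Sym}^3$ parameter is $\chi_p^3 \oplus \chi_p^2 \oplus \chi_p \oplus 1$. For $p \geq 5$, where cubing and squaring preserve the conductor of a ramified character, the conductor of this sum is exactly $3c(\chi_p)$, while $c(\pi_p) = c(\chi_p)$: the symmetric cube \emph{triples} the conductor for this family. So to land in $E_k(p^n)$ you need $c(\chi_p) \approx n/3$, which leaves only $\sim p^{n/3}$ admissible characters and hence $|E_k(p^n)| \gg p^{n/3}$ at best. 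The count $\varphi(p^n)$ you quote would indeed produce $p^{n-1}$ source forms, but they land in $E_k(p^{3n})$, not $E_k(p^n)$; re-indexing by $m=3n$ again gives only $p^{m/3 - 1}$, far short of the target $p^{m-1}$.

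The paper avoids this loss by taking $\pi_p$ supercuspidal. For a dihedral supercuspidal $\pi_p = w_{\mathcal{F},\zeta}$ with trivial central character and $\zeta^6 \neq 1$, Lemma~\ref{lemma-conductor-sym-cube} computes $c(\mathrm{Sym}^3\pi_p)$ as $2c(\zeta^3)+2c(\zeta)$ (unramified case) or $c(\zeta^3)+c(\zeta)+2$ (ramified case), and Proposition~\ref{proposition-conductor-inequality} then gives $c(\pi_p) \leq c(\mathrm{Sym}^3\pi_p) \leq 2c(\pi_p)$: the conductor only \emph{doubles}. That factor of two rather than three is precisely what lets the $\gg_k p^{2n}$ newforms of level $\leq p^n$ (Theorem~\ref{dimension-estimate-new-forms}), minus the $O_k(p^{n+2})$ dihedral ones (Proposition~\ref{special-case}), all fall into $E_k(p^{2n})$, yielding $|E_k(p^{2n})| \gg_k p^{2n}$ and hence $|E_k(p^n)| \gg_k p^{n-1}$. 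The missing idea in your proposal is exactly this supercuspidal conductor inequality, which is the paper's key technical contribution; without it the exponent degrades to roughly $n/3$. Two smaller inaccuracies: your $O_k(n)$ estimate for the dihedral subfamily is far weaker than what is true (Ambi's bound, used in the paper, is $O_k(p^{n+1+\epsilon})$), and for $p=3$ cubing does \emph{not} preserve the conductor of a wildly ramified character, so your parenthetical ``$p\geq 3$'' should read ``$p\geq 5$''; but neither of these is the principal obstruction.
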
\medskip
	
	Since the symmetric cube transfer of a cohomological representation $\pi$ of ${\rm GL}_2(\mathbb{A}_\Q)$ is also cohomological, the above result gives an estimate of the contribution of symmetric cube of representations of ${\rm GL}_2$ to the cuspidal cohomology of ${\rm GL}_4$.\medskip

	We briefly describe the structure of the remaining paper. After setting up the notations in Section \ref{preliminaries}, we discuss a particular calculation of conductors of representations of $p$-adic groups and give an important inequality result in Section \ref{calculations of conductors} (see Proposition \ref{proposition-conductor-inequality}). In Section \ref{proof-of-main-theorem}, we give the proof of Theorem \ref{main-theorem} using the aforementioned work of Kim \& Shahidi \cite{kim-shahidi}, C. Ambi \cite{ambi}, Raghuram \cite{Raghuram} and other results developed in previous sections. In the end we discuss some open questions and our ongoing work in this direction. \bigskip
	
		{\it Acknowledgements:} 
Both the authors are thankful to A. Raghuram and Chaitanya Ambi for valuable discussions regarding this paper. Chandrasheel Bhagwat would like to acknowledge the support of MATRICS research grant MTR/2018/000102 from the Science and Engineering Research Board, Department of Science and Technology, Government of India during this work. Sudipa Mondal would like to acknowledge the support of CSIR PhD fellowship during this work.\bigskip

\section{Preliminaries}\label{preliminaries}

\subsection{Symmetric cube transfer} We describe the construction of the symmetric cube transfer for a cuspidal automorphic representation $\pi$ of ${\rm GL}_2(\mathbb{A_{\mathbb F}})$ for a number field $\mathbb F$ (though it is well known to the experts).\smallskip

Let $\pi= \bigotimes \limits_v \pi_v$ be a cuspidal automorphic representation of ${\rm GL}_2(\mathbb{A}_{\mathbb{F}})$. For each place $v$, let $\phi_v$ be the two dimensional representation of the Weil-Deligne group attached to $\pi_v$. Let ${\rm sym}^3$ denote the $4$-dimensional irreducible representation of ${\rm GL}_2(\mathbb C)$. Then ${\rm sym}^3 \circ \phi_v $ is a four dimensional representation of the Weil-Deligne group. Using the Local Langlands correspondence, this gives an irreducible representation of ${\rm GL}_4(\mathbb{F}_v)$, denoted by ${\rm sym}^3(\pi_v)$. \smallskip

Define the automorphic representation $\text{sym}^3(\pi)$ of ${\rm GL}_4(\mathbb{A_{\mathbb F}})$ by
$$\text{sym}^3(\pi)= \bigotimes \limits_v \text{sym}^3(\pi_v).$$ 
This representation $\text{sym}^3(\pi)$ is known as the symmetric cube transfer of $\pi$.   \smallskip

Note: In this paper, we only consider the case $\mathbb F = \mathbb Q$. \medskip


			
	\subsection{Dimension formulae for spaces of cusp forms}
	
	
	Let $N$ be a positive integer. Consider the following congruence subgroup of $\text{SL}_2(\mathbb{Z})$,
	\begin{align*}
	\Gamma_1(N) & = \left\{A \in \text{SL}_2(\mathbb{Z}): A \equiv \begin{bmatrix}
	1 & * \\
	0 & 1 
	\end{bmatrix}(\text{mod } N) \right\} .
    \end{align*}
    
	For an integer $k$, let $S_k(\Gamma_1(N))$ and $S_k^\text{new}(\Gamma_1(N))$ are the spaces of cusp forms of weight $k$ and newforms of weight $k$, respectively for the congruence subgroup $\Gamma_1(N)$. The following well-known results will be important in the proof of the main theorem (see  \cite{miyake} for the details and \cite{ambi} for the formulation of these results as given below).\medskip
	
\noindent  Notation: 	Let $f$ and $g$ be two non-negative real valued functions on $\mathbb{N}$. We write $f \ll g$ (or equivalently $g \gg f$)  if there exists a constant $C$ and $n_0\in \mathbb{N}$ such that 
$$f(n) \leq Cg(n) \quad \forall ~ n\geq n_0.$$ 
	
	We write $f \sim g$ if both $f \ll g$ and $g \ll f$ hold.\medskip
	
	\begin{theorem}\label{dimension-estimate-cusp-forms}
		\emph{[A dimension formula for cusp forms for $\Gamma_1(N)$]}
		 $$\text{dim}_{\mathbb{C}}~S_k(\Gamma_1(N)) \sim_k N^2 \quad \text{ as }N \longrightarrow \infty. $$
		where the implied constant depends on $k$.
	\end{theorem}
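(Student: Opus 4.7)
The plan is to reduce the assertion to the asymptotic growth of the genus $g(X_1(N))$ of the modular curve $X_1(N)$, by invoking the explicit dimension formula for spaces of cusp forms on a congruence subgroup (see Miyake, Theorem 2.5.2, or Diamond--Shurman, Theorem 3.5.1). For $k=2$ one has simply $\dim_{\mathbb{C}} S_2(\Gamma_1(N)) = g(X_1(N))$, while for even $k\geq 4$ and $N\geq 5$ the formula reads
\[
\dim_{\mathbb{C}} S_k(\Gamma_1(N)) \;=\; (k-1)\bigl(g(X_1(N))-1\bigr) \;+\; \tfrac{k-2}{2}\,\varepsilon_\infty(\Gamma_1(N)),
\]
where $\varepsilon_\infty = \varepsilon_\infty(\Gamma_1(N))$ denotes the number of cusps of $X_1(N)$; no elliptic points appear because $\Gamma_1(N)$ has none for $N\geq 5$. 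In both cases, once $N$ is large, the dimension is essentially a linear function of the genus, up to an $\varepsilon_\infty$ correction.

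The central step is then to verify $g(X_1(N)) \asymp N^2$. Applying the standard genus formula to $\Gamma_1(N)$ yields $g(X_1(N)) = 1 + \tfrac{1}{24}[\mathrm{SL}_2(\mathbb{Z}):\Gamma_1(N)] - \tfrac{1}{2}\varepsilon_\infty$ for $N\geq 5$, while the index admits the explicit expression
\[
[\mathrm{SL}_2(\mathbb{Z}) : \Gamma_1(N)] \;=\; N^2 \prod_{p \mid N}\!\left(1 - \tfrac{1}{p^2}\right).
\]
Since $\prod_{p}(1-p^{-2}) = 6/\pi^{2} > 0$, the restricted product $\prod_{p\mid N}(1-p^{-2})$ is bounded below by this absolute positive constant and trivially above by $1$; hence the index, and therefore the leading contribution to $g(X_1(N))$, is $\asymp N^2$ uniformly in $N$.

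Next I would check that the cusp term is of lower order, $\varepsilon_\infty(\Gamma_1(N)) = o(N^2)$. Using the well-known formula $\varepsilon_\infty(\Gamma_1(N)) = \tfrac{1}{2}\sum_{d \mid N} \phi(d)\phi(N/d)$ together with the trivial bound $\phi(d)\phi(N/d) \leq N$, one obtains $\varepsilon_\infty \leq \tfrac{1}{2} d(N)\cdot N = O(N^{1+\varepsilon})$ for every $\varepsilon > 0$. This is negligible compared with $N^2$, so the main term $(k-1)(g-1)$ in the dimension formula dominates and we conclude $\dim_{\mathbb{C}} S_k(\Gamma_1(N)) \sim_k N^2$, the implicit constant being a positive multiple of $(k-1)$.

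There is no deep obstacle; the main care is bookkeeping, namely handling the exceptional small values of $N$ and the special case $k=2$ of the dimension formula, and keeping the various Euler-product bounds uniform in $N$. These issues are harmless because only the asymptotic statement $\sim_k$ is required, so any bounded or polynomially smaller discrepancy (including the $-1$ shift between the $k=2$ and $k\geq 4$ formulas) is absorbed into the implicit constant depending on $k$.
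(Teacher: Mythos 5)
Your proof is correct, and it follows the same standard route the paper implicitly invokes by citing Miyake: compute $\dim S_k(\Gamma_1(N))$ from the Riemann--Roch dimension formula, reduce to $g(X_1(N)) \asymp N^2$ via the index formula $[\mathrm{SL}_2(\mathbb{Z}):\Gamma_1(N)] = N^2\prod_{p\mid N}(1-p^{-2})$ with its uniform bounds $6/\pi^2 < \prod_{p\mid N}(1-p^{-2}) \le 1$, and dispose of the cusp count $\varepsilon_\infty(\Gamma_1(N)) = O(N^{1+\varepsilon})$ as a lower-order term. The paper itself records this theorem without proof, referring to Miyake and to Ambi's formulation, so there is nothing in the source to contrast with beyond this.
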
 \smallskip
	
	\begin{theorem}\label{dimension-estimate-new-forms} \emph{[A dimension formula for newforms for $\Gamma_1(N)$]} 
	For $n \geq 1$ and a prime $p \geq 2$, we have
		
		$$  \frac{\text{dim}_\mathbb{C}~(S_k^\text{new}(\Gamma_1(p^n)))}{p^{2n}} = \left( \frac{k-1}{4\pi^2} \right) \left(1-\frac{1}{p^2}\right)^2 + o(1) \quad \text{ as } n \longrightarrow \infty.$$
	\end{theorem}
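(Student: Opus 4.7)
The plan is to derive the claim by combining two standard facts: the classical oldform--newform decomposition of $S_k(\Gamma_1(N))$, and a main-term asymptotic for $\dim_{\mathbb{C}} S_k(\Gamma_1(p^n))$ obtainable from the explicit dimension formula in Miyake.

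I would first invoke the decomposition
$$S_k(\Gamma_1(N)) \;=\; \bigoplus_{M \mid N}\,\bigoplus_{d \mid (N/M)} \iota_d\bigl(S_k^{\text{new}}(\Gamma_1(M))\bigr),$$
where $\iota_d \colon f(z) \mapsto f(dz)$ is the standard level-raising map. Writing $a_n := \dim_{\mathbb{C}} S_k(\Gamma_1(p^n))$ and $b_n := \dim_{\mathbb{C}} S_k^{\text{new}}(\Gamma_1(p^n))$, and passing to dimensions with $N = p^n$, this yields the convolution
$$a_n \;=\; \sum_{j=0}^n (n - j + 1)\, b_j,$$
using $\sigma_0(p^{n-j}) = n - j + 1$. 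The generating-function identity $A(x) = B(x)/(1-x)^2$ inverts this to
$$b_n \;=\; a_n - 2\,a_{n-1} + a_{n-2}, \qquad n \geq 2.$$

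Next, I would appeal to the explicit dimension formula for $S_k(\Gamma_1(N))$ in Miyake to extract a sharp asymptotic of the form
$$a_n \;=\; \tfrac{k-1}{4\pi^2}\, p^{2n} + O_k(p^n),$$
the subsidiary contributions from cusps, elliptic fixed points, and regular cusps being uniformly $O_k(p^n)$ in $n$. Substituting into the inversion formula,
$$b_n \;=\; \tfrac{k-1}{4\pi^2}\bigl(p^{2n} - 2\, p^{2n-2} + p^{2n-4}\bigr) + O_k(p^n) \;=\; \tfrac{k-1}{4\pi^2}\, p^{2n}\bigl(1 - p^{-2}\bigr)^2 + O_k(p^n),$$
and dividing by $p^{2n}$ yields the desired limit.

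The main technical obstacle is the third step: one must track the subsidiary terms in Miyake's formula with enough precision so that, after forming the second difference $a_n - 2a_{n-1} + a_{n-2}$, the corrections are absorbed into an $o(p^{2n})$ error, and the precise main-term constant $\tfrac{k-1}{4\pi^2}$ (together with the factor $(1 - p^{-2})^2$) emerges cleanly. This is essentially careful bookkeeping of how each contribution (elliptic, cusp, character-dependent) scales in $n$, but it is where the delicacy of the argument resides; the remaining combinatorics of the Möbius inversion is entirely elementary.
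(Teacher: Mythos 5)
The paper does not give a proof of this theorem: it is quoted as a ``well-known result,'' with a pointer to Miyake for the underlying dimension formula and to Ambi for the formulation stated. So there is no argument in the paper to compare against, and your outline --- Atkin--Lehner decomposition, passing to dimensions to get $a_n=\sum_{j\le n}(n-j+1)b_j$, the second-difference inversion $b_n=a_n-2a_{n-1}+a_{n-2}$, then an asymptotic for $a_n=\dim_\mathbb{C}S_k(\Gamma_1(p^n))$ --- is indeed the natural route one would take.

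The step you yourself flag as the delicate one is, however, where the argument breaks. You assert $a_n=\tfrac{k-1}{4\pi^2}\,p^{2n}+O_k(p^n)$, but the dominant term in Miyake's formula comes from $(k-1)(g-1)$, and the genus of $X_1(p^n)$ grows like $\tfrac{1}{24}[{\rm PSL}_2(\Z):\overline{\Gamma_1(p^n)}]$. Since $[{\rm SL}_2(\Z):\Gamma_1(p^n)]=p^{2n}(1-p^{-2})$, the standard asymptotic is $a_n\sim\tfrac{k-1}{24}\,p^{2n}(1-p^{-2})$ (the cusp and elliptic contributions being $o(p^{2n})$); the main term carries the Euler factor $(1-p^{-2})$ and a rational constant, not $\tfrac{k-1}{4\pi^2}\,p^{2n}$. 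Substituting the correct $a_n$ into your inversion yields $b_n\sim\tfrac{k-1}{24}\,p^{2n}(1-p^{-2})^{3}$ --- a \emph{cube} of $(1-p^{-2})$, not the square appearing in the statement. Your claimed asymptotic for $a_n$ drops the $(1-p^{-2})$ factor and replaces the constant, and it is precisely this double discrepancy that makes your computation land on the stated shape; that is a compensating error, not a proof. Before going further you should re-derive the asymptotic for $\dim_\mathbb{C}S_k(\Gamma_1(p^n))$ directly from Miyake and reconcile it with the exponent and the $4\pi^2$ in Ambi's formulation, since the second-difference argument applied to the index-based main term does not reproduce $\left(\tfrac{k-1}{4\pi^2}\right)(1-p^{-2})^2$ as written.
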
 \medskip

		Let $C_k(N)$ denote the set of normalised cusp eigenforms of Hecke operators of $\Gamma_1(N)$ of weight $k$ obtained by automorphic induction of gr\"o{\ss}encharacters of imaginary quadratic extensions. Define $ N' := \prod \limits_{p|N} p$, i.e. the product of all distinct prime factors of $N$. The following upper bound estimate was proved by C. Ambi (see \cite{ambi}).\medskip
		
		\begin{theorem}\label{chaitanya-upper-bound} 
	Let $k \geq 1, \epsilon \in (0,1)$. Then,
	$$ |C_k(N)| ~\ll_{k,\epsilon}~ N \cdot N'^{(1+\epsilon)} \text{ } \text{ } \quad \text{as} ~N \longrightarrow \infty. $$
	\end{theorem}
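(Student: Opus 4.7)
\emph{Plan.} I would parametrize the elements of $C_k(N)$ by pairs $(\mathbb E, \chi)$, where $\mathbb E = \Q(\sqrt{-d})$ is an imaginary quadratic field and $\chi$ is a gr\"o{\ss}encharacter of $\mathbb E$ whose automorphic induction realises the eigenform, and then bound the number of such pairs by combining the standard conductor formula for automorphic induction with Siegel's class-number estimate and a ray-class-number count.

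\emph{Step 1 (Parametrization).} By construction, every $f \in C_k(N)$ arises as $\mathrm{AI}(\chi)$ for some gr\"o{\ss}encharacter $\chi$ of an imaginary quadratic $\mathbb E = \Q(\sqrt{-d})$, with infinity type determined by the fixed weight $k$. The Weil--Jacquet--Langlands formula
\[
\mathrm{cond}(\mathrm{AI}(\chi)) \;=\; d \cdot N_{\mathbb E / \Q}(\mathfrak f_\chi)
\]
forces $d \mid N$ and $N_{\mathbb E/\Q}(\mathfrak f_\chi) \mid N/d$, where $\mathfrak f_\chi \subset \mathcal O_{\mathbb E}$ is the conductor of $\chi$. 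Because fundamental discriminants of imaginary quadratic fields are squarefree or four times squarefree, $d \mid N$ actually forces $d \mid 4N'$; hence there are at most $2 \cdot 2^{\omega(N)} \ll (N')^{\epsilon}$ admissible fields $\mathbb E$, and $d \leq 4N'$ in every case.

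\emph{Step 2 (Counting characters).} For each admissible $\mathbb E$, the number of gr\"o{\ss}encharacters of the prescribed infinity type whose conductor has norm dividing $N/d$ is bounded by summing the ray class numbers
\[
h_{\mathfrak m}(\mathbb E) \;=\; \frac{h(\mathbb E)\, \Phi(\mathfrak m)}{[\mathcal O_{\mathbb E}^\times : \mathcal O_{\mathbb E,\mathfrak m}^\times]} \;\ll\; h(\mathbb E)\, N(\mathfrak m)
\]
over ideals $\mathfrak m \subseteq \mathcal O_{\mathbb E}$ with $N(\mathfrak m) \mid N/d$. Arranging this sum as an Euler product whose local factors at each rational prime $p$ depend on its splitting behaviour (inert, split, or ramified) in $\mathbb E$, and invoking Siegel's effective bound $h(\mathbb E) \ll_{\epsilon} d^{1/2+\epsilon}$, the per-$\mathbb E$ contribution is controlled by $d^{1/2+\epsilon}(N/d)^{1+\epsilon}$. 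Oldform multiplicities are absorbed by the standard $\sigma_0(N/M) \ll_{\epsilon} N^{\epsilon}$.

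\emph{Step 3 (Assembly and main obstacle).} Summing over the admissible discriminants $d \mid 4N'$ and using $d \leq 4N'$ to exchange factors of $d$ for factors of $N'$ yields the claimed estimate $|C_k(N)| \ll_{k,\epsilon} N \cdot (N')^{1+\epsilon}$ after routine bookkeeping. The main obstacle is the Euler-product step: a naive bound based on $h_{\mathfrak m} \ll h(\mathbb E)\, N(\mathfrak m)$ applied to the ambient ideal $\mathfrak m = (N/d)\mathcal O_{\mathbb E}$ (of norm $(N/d)^2$) would give only $N^{2}$, so restricting to ideals whose norm actually divides $N/d$ is essential. The finicky point is that ramified primes of $\mathbb E$ contribute simultaneously to the discriminant $d$ and to the $p$-part of $\mathfrak f_\chi$, so these contributions must be carefully disentangled in the Euler product to ensure the final exponent of $N'$ is $1+\epsilon$ and not larger.
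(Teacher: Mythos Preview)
This theorem is not proved in the present paper; it is quoted from Ambi \cite{ambi}. Only the special case $N=p^n$ (Proposition~\ref{special-case}) is argued here, and that sketch uses exactly your ingredients: parametrize by pairs $(\mathbb E,\chi)$ subject to $N_{\mathbb E/\Q}(\mathfrak f_\chi)\cdot|D_{\mathbb E/\Q}|\mid N$, collapse $\sum_f h^0_{\mathbb E}(f)$ to a single ray class number, bound that by $h_{\mathbb E}\,\Phi_{\mathbb E}$, and finish with the class-number estimate. So your overall plan is aligned with what the paper (and presumably Ambi) does.

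That said, your Step~3 does not close as written. Summing the per-field bound $d^{1/2+\epsilon}(N/d)^{1+\epsilon}$ over the $O((N')^{\epsilon})$ admissible discriminants $d\le 4N'$ yields
\[
\sum_{d} d^{1/2+\epsilon}(N/d)^{1+\epsilon}\;=\;N^{1+\epsilon}\sum_{d} d^{-1/2}\;\ll\;N^{1+\epsilon}(N')^{\epsilon},
\]
and $N^{1+\epsilon}$ is not $\ll N\,(N')^{1+\epsilon}$ in general: for $N=p^n$ one has $N^{1+\epsilon}=p^{n+n\epsilon}$ while $N(N')^{1+\epsilon}=p^{n+1+\epsilon}$. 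The same objection applies to your oldform factor $\sigma_0(N/M)\ll N^{\epsilon}$. The whole force of the $N'$ in the statement is that every $\epsilon$-loss must land on the radical $N'$, never on $N$ itself; your Steps~2--3 lose $\epsilon$ against $N$. In the paper's prime-power case this issue evaporates because the only field is $\Q(\sqrt{-p})$, the prime $p$ is ramified, and all admissible conductors divide a \emph{single} ideal $p^m\mathcal O_{\mathbb E}$, so one ray class number suffices with no divisor-function slack. For general $N$---especially at split primes, where the ideals $\mathfrak m$ with $N(\mathfrak m)\mid M$ are not totally ordered and the local count picks up an extra factor comparable to $v_p(M)$---``routine bookkeeping'' does not deliver the stated exponent, and the Euler-product step you flag as the main obstacle really is one.
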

	\medskip

We restate the above theorem in the special case as below.

\begin{proposition}\label{special-case}
Let $p$ be an odd prime. Then for $k\geq 2,~ \epsilon \in (0,1)$, we have,
$$|C_k(p^n)| ~\ll_{k,\epsilon} ~p^{n+1+\epsilon} \text{ as } ~n \longrightarrow \infty$$

\end{proposition}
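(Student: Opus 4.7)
The proof is essentially a direct specialization of Theorem \ref{chaitanya-upper-bound}. The plan is to simply plug $N = p^n$ into the general bound and compute the resulting exponent.

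First I would observe that if $N = p^n$ for an odd prime $p$, then the radical $N' := \prod_{q \mid N} q$ collapses to $N' = p$, since $p$ is the only prime divisor of $p^n$. Substituting both $N = p^n$ and $N' = p$ into the right-hand side of Theorem \ref{chaitanya-upper-bound} yields
\[
N \cdot (N')^{1+\epsilon} \;=\; p^n \cdot p^{1+\epsilon} \;=\; p^{n+1+\epsilon}.
\]
Then, applying Theorem \ref{chaitanya-upper-bound} with this choice of $N$ (and keeping the same parameters $k \geq 2$ and $\epsilon \in (0,1)$), the desired asymptotic bound
\[
|C_k(p^n)| \;\ll_{k,\epsilon}\; p^{n+1+\epsilon} \quad \text{as } n \to \infty
\]
follows immediately, with the implied constant inherited from Theorem \ref{chaitanya-upper-bound}.

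There is no genuine obstacle here; the proposition is stated separately only for convenient reference in Section \ref{proof-of-main-theorem}, where the prime power level $p^n$ is the relevant case. The only thing to verify is the harmless hypothesis shift from $k \geq 1$ in the general statement to $k \geq 2$ in the specialization, which is immediate since $\{k \geq 2\} \subset \{k \geq 1\}$.
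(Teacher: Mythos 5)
Your proof is correct, and it is the most natural reading of the paper's own framing --- the authors even announce Proposition~\ref{special-case} as a ``restatement'' of Theorem~\ref{chaitanya-upper-bound}. Plugging $N = p^n$ into $|C_k(N)| \ll_{k,\epsilon} N \cdot N'^{1+\epsilon}$ with $N' = p$ indeed gives $p^{n+1+\epsilon}$ immediately, and the hypothesis $k\geq 2$ is subsumed by $k\geq 1$. However, the paper does not actually cite the general theorem in its proof; instead it re-runs the underlying argument in the prime-power case. It invokes Lemma~\ref{p=1mod4} to dismiss $p\equiv 1 \pmod 4$ outright (where $|C_k(p^n)|=0$), and for $p\equiv 3 \pmod 4$ it uses Lemma~\ref{p=3mod4} to bound the conductors $f$ of relevant Hecke characters of $\mathbb{E}=\mathbb{Q}(\sqrt{-p})$ by $p^m\mathcal{O}_{\mathbb{E}}$ with $m \approx n/2$, then dominates $|C_k(p^n)|$ by the character count $h'_{\mathbb{E}}(p^m\mathcal{O}_{\mathbb{E}}) \leq h_{\mathbb{E}}\,\Phi_{\mathbb{E}}(p^m\mathcal{O}_{\mathbb{E}})$ via \cite[Theorem 3.25(i)]{narkiewicz} and finishes as in \cite[Theorem~1.1]{ambi}. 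What your route buys is brevity and a clean separation of concerns; what the paper's route buys is a self-contained, explicit argument in the prime-power setting that also motivates the two auxiliary lemmas introduced just before the proposition (and, read carefully, the ray-class-group bound would in fact support a slightly sharper exponent, though the authors do not pursue this). Both are valid proofs of the stated inequality.
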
\medskip

We use the following two lemmas in the proof of \ref{special-case}.

\begin{lemma}\label{p=3mod4}
		Let $p \equiv 3 \mod 4$ and $\mathbb{E}=\Q(\sqrt{-p})$. Suppose $f \subseteq \mathcal{O}_{\mathbb{E}}$ such that  $N_{\mathbb{E}/\mathbb{F}}(f) \cdot \allowbreak |D_{\mathbb{E}/\mathbb{Q}}|| p^n$ for some $n\geq 1$, then $f |p^m \mathcal{O}_{\mathbb{E}}$ where $m= \frac{n}{2}$ if $n$ is even or $m= \frac{n-1}{2}$ if $n$ is odd.
\end{lemma}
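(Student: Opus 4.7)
The plan is to reduce the divisibility condition to a condition on the exponent of the unique prime of $\mathcal{O}_{\mathbb{E}}$ above $p$, and then compare with the factorization of $p^m\mathcal{O}_{\mathbb{E}}$.

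First I would use the hypothesis $p \equiv 3 \pmod 4$ to pin down the discriminant. Since $-p \equiv 1 \pmod 4$, one has $D_{\mathbb{E}/\mathbb{Q}} = -p$, so $|D_{\mathbb{E}/\mathbb{Q}}| = p$. In particular $p$ divides the discriminant, so $p$ ramifies in $\mathbb{E}$: there is a unique prime $\mathfrak{p}$ of $\mathcal{O}_{\mathbb{E}}$ above $p$, satisfying
\[
p\mathcal{O}_{\mathbb{E}} = \mathfrak{p}^2, \qquad N_{\mathbb{E}/\mathbb{Q}}(\mathfrak{p}) = p.
\]

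Next, the divisibility $N_{\mathbb{E}/\mathbb{Q}}(f) \cdot |D_{\mathbb{E}/\mathbb{Q}}| \mid p^n$ becomes $N_{\mathbb{E}/\mathbb{Q}}(f) \cdot p \mid p^n$, hence $N_{\mathbb{E}/\mathbb{Q}}(f) \mid p^{n-1}$. Because $N_{\mathbb{E}/\mathbb{Q}}(f)$ is a $p$-power, every prime ideal in the factorization of $f$ must lie above $p$, and by the ramification above the only such prime is $\mathfrak{p}$. So $f = \mathfrak{p}^j$ for some $j \geq 0$, and $N_{\mathbb{E}/\mathbb{Q}}(f) = p^j$ forces $j \leq n-1$.

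Finally I would compare with $p^m \mathcal{O}_{\mathbb{E}} = \mathfrak{p}^{2m}$. The condition $f \mid p^m\mathcal{O}_{\mathbb{E}}$ is equivalent to $j \leq 2m$. If $n$ is even and $m = n/2$, this reads $j \leq n$, which is implied by $j \leq n-1$. If $n$ is odd and $m = (n-1)/2$, it reads $j \leq n-1$, which is exactly the bound already obtained. This completes the argument.

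There is no substantive obstacle here: the lemma is a direct consequence of the ramification of $p$ in $\mathbb{E}$ and a bookkeeping comparison of exponents. The only point one should be careful about is verifying the discriminant computation for $p \equiv 3 \pmod 4$ (so that $-p \equiv 1 \pmod 4$), which is what ensures the factor of $p$ contributed by $|D_{\mathbb{E}/\mathbb{Q}}|$ is exactly one, leaving the tight bound $j \leq n-1$ used in the odd case.
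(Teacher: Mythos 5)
Your proof is correct and follows essentially the same route as the paper's: compute $D_{\mathbb{E}/\mathbb{Q}}=-p$ from $-p\equiv 1\pmod 4$, use the ramification $p\mathcal{O}_{\mathbb{E}}=\mathfrak{p}^2$ to write $f=\mathfrak{p}^j$ with $j\le n-1$, and compare exponents with $p^m\mathcal{O}_{\mathbb{E}}=\mathfrak{p}^{2m}$. Your explicit case check of $j\le 2m$ for $n$ even and odd is just a slightly more spelled-out version of the paper's observation that $\mathfrak{p}^{n-1}\mid p^m\mathcal{O}_{\mathbb{E}}$.
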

\medskip

\begin{proof}
Since $p \equiv 3 \mod 4 \text{ that is } -p \equiv 1 \mod 4, \text{ we have } D_{\mathbb{E}/\mathbb{Q}}=-p$. Note that $p$ ramifies in $\mathbb{E}$ as $p | |D_{\mathbb{E}/\mathbb{Q}}|$ and there is only one prime above $p$, say $\mathfrak{p}$ with $N_{\mathbb{E}/\mathbb{F}}(\mathfrak{p})=p$. Since $N_{\mathbb{E}/\mathbb{F}}(f) | p^{n-1}$, we have $f=\mathfrak{p}^t$ for some $t \leq n-1$ and $p^t | p^{n-1} $. Since $p\mathcal{O}_{\mathbb{E}}=\mathfrak{p}^2,$ we have
$$ \mathfrak{p}^{n-1}|p^m\mathcal{O}_{\mathbb{E}} \implies f=\mathfrak{p}^t|p^m\mathcal{O}_{\mathbb{E}} .$$
\end{proof}\smallskip

\begin{lemma}\label{p=1mod4}
Let $p \equiv 1 \mod 4$ be a prime. Then $\forall \text{ } k\geq 2 \text{ and } \forall \text{ } n\geq1$, we have, $|C_k(p^n)|=0.$
\end{lemma}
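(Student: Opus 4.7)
The plan is to show that no imaginary quadratic field $\mathbb{E}$ can support a gr\"o{\ss}encharacter $\chi$ whose automorphic induction produces a cusp form at level $p^n$ when $p \equiv 1 \pmod 4$. The obstruction lies entirely at the level of the discriminant.

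First I would recall the standard conductor formula: if $f \in C_k(p^n)$ arises by automorphic induction from a gr\"o{\ss}encharacter $\chi$ of an imaginary quadratic extension $\mathbb{E} = \mathbb{Q}(\sqrt{-d})$ with $d$ positive and squarefree, then the level (conductor) of $f$ equals $|D_{\mathbb{E}/\mathbb{Q}}| \cdot N_{\mathbb{E}/\mathbb{Q}}(\mathfrak{f}_\chi)$, where $\mathfrak{f}_\chi$ is the conductor of $\chi$. Since $f$ lies in $S_k(\Gamma_1(p^n))$, this product divides $p^n$. In particular $|D_{\mathbb{E}/\mathbb{Q}}|$ must be a power of $p$, and since $\mathbb{E}/\mathbb{Q}$ is a nontrivial extension, $|D_{\mathbb{E}/\mathbb{Q}}| > 1$.

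Next I would split into cases according to $d \bmod 4$. If $d \equiv 3 \pmod 4$ then $|D_{\mathbb{E}/\mathbb{Q}}| = d$, which must be a power of the prime $p$; combined with squarefreeness of $d$ this forces $d = p$, but this also forces $p \equiv 3 \pmod 4$, contradicting the hypothesis $p \equiv 1 \pmod 4$. If instead $d \equiv 1$ or $2 \pmod 4$, then $|D_{\mathbb{E}/\mathbb{Q}}| = 4d$, and divisibility $4d \mid p^n$ with $p$ odd is impossible since $4 \nmid p^n$. Hence no such $\mathbb{E}$ exists, so $C_k(p^n)$ is empty.

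The argument is essentially an observation about the arithmetic of discriminants of imaginary quadratic fields, so no serious obstacle is expected; the only technical point worth being careful about is citing or verifying the conductor formula for the automorphically induced representation (its finite part contributes $N_{\mathbb{E}/\mathbb{Q}}(\mathfrak{f}_\chi)$, and the contribution of the quadratic field itself is $|D_{\mathbb{E}/\mathbb{Q}}|$), which is a standard consequence of the conductor-discriminant formula and the local description of automorphic induction at ramified places.
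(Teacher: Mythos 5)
Your argument is correct and is precisely the one the paper has in mind: the paper does not reproduce a proof but instead cites \cite[Lemma 3.3]{ambi}, which uses exactly this discriminant obstruction, and the companion Lemma \ref{p=3mod4} in the paper runs the identical conductor computation $N_{\mathbb{E}/\mathbb{Q}}(\mathfrak{f}_\chi)\cdot|D_{\mathbb{E}/\mathbb{Q}}| \mid p^n$ in the case $p\equiv 3\pmod 4$. Your case split on $d\bmod 4$ (ruling out $d\equiv 1,2$ because $4\mid |D_{\mathbb{E}/\mathbb{Q}}|$ would have to divide $p^n$, and ruling out $d\equiv 3$ because squarefreeness forces $d=p\equiv 3\pmod 4$, contradicting $p\equiv 1\pmod 4$) is exactly the expected argument, so this matches the paper's approach.
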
\smallskip

The proof of Lemma \ref{p=1mod4} follows from the proof of \cite[Lemma 3.3]{ambi}.\medskip

We sketch the outline of the proof of Proposition \ref{special-case} here. We need to only consider the case $p \equiv 3 \mod 4$ in view of  Lemma \ref{p=1mod4}.\medskip


\begin{proof}
Let, for an ideal $ \mathfrak{j} \subset \mathcal{O}_{\mathbb{E}}$, $h_{\mathbb{E}}^0(\mathfrak{j})$  and $h_{\mathbb{E}}'(\mathfrak{j})$ denote the number of Hecke characters with conductor exactly $\mathfrak{j}$ and the number of characters of the narrow ray class group modulo $\mathfrak{j}$ respectively. Hence $\sum \limits_{f|\mathfrak{j}} h_{\mathbb{E}}^0(f)=h_{\mathbb{E}}'(\mathfrak{j})$. we have,
$$|C_k(p^n)| \leq \sum \limits_{\substack{\mathbb{E}=\mathbb{Q}(\sqrt{-p}), \\  N_{\mathbb{E}/\mathbb{Q}}(f) \cdot|D_{\mathbb{E}/\mathbb{Q}}||p^n}} h_{\mathbb{E}}^0(f) .$$

By Lemma \ref{p=3mod4}, 
$$|C_k(p^n)|\leq \sum \limits_{\substack{f|p^m\mathcal{O}_{\mathbb{E}} \\ \mathbb{E}=\mathbb{Q}(\sqrt{-p}) }} h_{\mathbb{E}}^0(f)= h'_{\mathbb{E}}(p^m\mathcal{O}_{\mathbb{E}}) \leq  h_{\mathbb{E}}\Phi_{\mathbb{E}}(p^m\mathcal{O}_{\mathbb{E}}) $$ 
where $h_{\mathbb{E}}=$ the class number of $\mathbb{E}$ and $\Phi_{\mathbb{E}}(\mathcal{I})= N_{\mathbb{E}/\mathbb{Q}}(\mathcal{I})\prod \limits_{\substack{\mathfrak{p}|\mathcal{I}, \\ 
\mathfrak{p} \text{ prime }}} (1-N_{\mathbb{E}/\mathbb{Q}}(\mathfrak{p})^{-1})$ with $\Phi_{\mathbb{E}}(\mathcal{O}_{\mathbb{E}})=1$. The last inequality follows from \cite[Theorem 3.25(i)]{narkiewicz}. The rest follows similarly as in \cite[Theorem 1.1]{ambi}. 

\end{proof}\medskip

	\subsection{The Level Structure :}  \label{level-structure}
	Consider the algebraic group ${\rm GL}_m$.  For a finite prime $p$ of $\Q$, let $\Z_p$ be the ring of $p$-adic integers. For each integer $n \geq 0$, define,
	$$ K_p^m(n):= \{ x=(x_{i,j})_{m\times m} \in {\rm GL}_m(\mathbb{Z}_p) : x_{m,k}\in p^n\mathbb{Z}_p \text{ } \text{for} \text{ } 1\leq k<m, \text{ } x_{m,m}-1 \in p^n\mathbb{Z}_p\}.$$
	
	Let $\mathbb{A}_f$ denote the finite part of ad{\`e}les over $\Q$. Let $N$ be a positive integer with prime factorisation $N =  \prod \limits_{i=1}^{r}p_i^{n_i}$ . Define a compact open subgroup $K_f^m(N) = \prod \limits_p K_p$ of ${\rm GL}_m(\mathbb{A}_f)$ where 
	$$K_p =
	\begin{cases} 
	& K_{p_i}^m{(n_i)} \quad \text{if} ~ p \mid N~\text{i.e.},~ p = p_i  \\
	& {\rm GL}_m(\mathbb{Z}_p) \quad \text{if}~ p \nmid N
	\end{cases}
	$$
	
	For each $N \ge 1$, the compact open subgroup $K_f^m(N) \subseteq {\rm GL}_m (\mathbb A_f)$ is called the level structure corresponding to $N$. \medskip
	
		For an even integer $k \geq 0$, let $D_k(N)$ be the set of all cuspidal automorphic representations of 
${\rm GL}_3(\mathbb{A}_\Q)$ corresponding to the level structure $K_f^3(N)$ used by Ambi in \cite{ambi} (which is slightly different than the level structure we are using) and highest weight $\mu_k :=(k-2,0,2-k)$
which are obtained by symmetric square transfer of
 cuspidal automorphic representations of ${\rm GL}_2(\mathbb{A}_\Q)$ 
 of every possible level structure. The following result was proved by C. Ambi in \cite{ambi} which gives a lower bound on $|D_k(p^n)|$.
	
\begin{theorem}\label{chaitanya-lower-bound}  
	For $k \geq 2 \text{ and } p>2$, we have
$$ |D_k(p^n)| \gg_k p^{2n/5} \hspace{1 cm} as \quad n \longrightarrow \infty, $$ 
	where the implied constant depends on $k$.
\end{theorem}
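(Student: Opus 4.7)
The plan is to produce enough elements of $D_k(p^n)$ by starting from non-dihedral newforms on ${\rm GL}_2$ at an appropriate level and applying the Gelbart--Jacquet symmetric square lift, which is cuspidal in the non-dihedral case. The target lower bound $p^{2n/5}$ suggests that one should take ${\rm GL}_2$-newforms at level $p^m$ with $m=\lfloor n/5\rfloor$, since by Theorem~\ref{dimension-estimate-new-forms} this gives $\gg_k p^{2m}\sim p^{2n/5}$ newforms.

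The first step is a local calculation: for a cuspidal automorphic representation $\pi$ of ${\rm GL}_2(\mathbb{A}_\Q)$ of conductor $p^m$, I would bound the local conductor exponent of ${\rm sym}^2(\pi_p)$ by $5m$. Using the local Langlands correspondence, one handles the cases of unramified and ramified principal series, Steinberg, and supercuspidal representations separately; the exponent $5$, which determines the $2/5$ in the final bound, is the sharp constant forced by the worst case. This is the ${\rm GL}_3$-analogue of Proposition~\ref{proposition-conductor-inequality}. Granting this, whenever $m\leq n/5$ the lift ${\rm sym}^2(\pi)$ has level dividing $K_f^3(p^n)$.

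The second step is counting. Fix $m=\lfloor n/5\rfloor$: by Theorem~\ref{dimension-estimate-new-forms} there are $\gg_k p^{2m}$ newforms of weight $k$ for $\Gamma_1(p^m)$, while by Proposition~\ref{special-case} at most $\ll_{k,\epsilon} p^{m+1+\epsilon}$ of these are dihedral (CM), which is dominated by $p^{2m}$ for $n$ large. Each remaining non-dihedral newform $f$ corresponds to a non-dihedral cuspidal automorphic representation $\pi_f$ of ${\rm GL}_2(\mathbb{A}_\Q)$ of weight $(k/2-1,\,1-k/2)$; by Gelbart--Jacquet, ${\rm sym}^2(\pi_f)$ is a cuspidal automorphic representation of ${\rm GL}_3(\mathbb{A}_\Q)$, cohomological of highest weight $\mu_k=(k-2,0,2-k)$, and by the first step it lies in $D_k(p^n)$. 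The map $\pi_f\mapsto{\rm sym}^2(\pi_f)$ has fibers of bounded size (two non-dihedral $\pi_f$ share a symmetric square only if they differ by a quadratic twist, and only finitely many quadratic twists can coexist at the same level and weight), giving $|D_k(p^n)|\gg_k p^{2m}\sim p^{2n/5}$.

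The main obstacle is the first step: establishing the precise local conductor inequality $f({\rm sym}^2(\pi_p))\leq 5\,f(\pi_p)$ uniformly over all local types of $\pi_p$. The value $5$ is exactly what pins down the exponent $2/5$ in the final bound, and makes this the ${\rm GL}_3$-side analogue of the conductor work carried out in Section~\ref{calculations of conductors} for ${\rm GL}_4$.
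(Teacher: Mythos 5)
This theorem is cited from Ambi's paper \cite{ambi} rather than proved in the present article, so there is no internal proof to compare against; I will instead assess your proposal on its own terms.

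Your high-level strategy --- count non-dihedral newforms of weight $k$ and level $p^m$, remove the dihedral ones via Proposition~\ref{special-case}, push the survivors through the Gelbart--Jacquet lift, and invoke a local conductor bound to place the resulting ${\rm sym}^2(\pi)$ inside $D_k(p^n)$ --- is the natural one, and it is the same outline that Section~\ref{proof-of-main-theorem} uses for the ${\rm GL}_4$ result. The bounded-fiber remark for ${\rm sym}^2$ (two newforms share a symmetric square only if they differ by a quadratic twist, and at odd prime-power level there are at most two such twists) is also correct.

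The genuine gap is exactly where you flag it: the inequality $c\bigl({\rm sym}^2(\pi_p)\bigr)\le 5\,c(\pi_p)$ is asserted, not established, and the claim that $5$ is ``the sharp constant forced by the worst case'' is not substantiated --- no worst case is identified. Worse, the claim appears to be false as a sharp constant. Running the same computation as in Lemma~\ref{lemma-conductor-sym-cube} and Proposition~\ref{proposition-conductor-inequality} for the symmetric square in place of the cube, using ${\rm sym}^2(\phi)={\rm Ind}^{W_{\Q_p}}_{W_\mathcal{F}}(\eta^2)\oplus\chi$ for dihedral supercuspidals, and the elementary splitting into principal series and twists of Steinberg otherwise, one finds $c\bigl({\rm sym}^2(\pi_p)\bigr)\le 2\,c(\pi_p)$ (up to an additive constant) in every case with $p$ odd. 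So the reverse-engineered $5$ does not come from the local conductor calculation, and an argument built on it would actually yield a bound stronger than $p^{2n/5}$. Since the exponent $2/5$ is what Ambi actually proves, it must arise from something you have not accounted for --- most plausibly the level structure: the paper explicitly warns that the set $D_k(N)$ is defined with respect to a level structure $K_f^3(N)$ ``slightly different'' from the $K_f^m(N)$ defined in Section~\ref{level-structure}, and the translation between conductor and that other level structure is precisely what a correct proof must pin down. Until you identify Ambi's level structure and prove the corresponding containment statement, the argument has a hole at its load-bearing step.
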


\subsection{A generalisation to ${\rm GL}_4$}
Let $E_k(N)$ denote the set of cuspidal automorphic representations of ${\rm GL}_4(\mathbb A_\Q)$ corresponding to the level structure $K_f^4(N)$ and highest weight $\nu_k$ which are obtained by symmetric cube transfer of cuspidal automorphic representation of ${\rm GL}_2(\mathbb{A}_\Q)$ of weight $\lambda_k = \left( \dfrac{k}{2}-1, 1- \dfrac{k}{2}\right)$. See Lemma \ref{cohomological-sym-cube} and the preceding discussion for the description of the weight $\nu_k$.\medskip

\noindent Note that $E_k(N)$ is analogous to the set $D_k(N)$ defined earlier. The main result of this paper, Theorem \ref{main-theorem} (stated below again) gives an asymptotic estimate of the size of the set $E_k(N)$ generalising Theorem \ref{chaitanya-lower-bound}.

\begin{theorem*}
	Let $k\geq 2$ be an even integer and $p\geq 3$ be a prime. Then
	$$ |E_k(p^{n})| ~\gg_k~ p^{n-1} \hspace{1 cm} \text{as} \hspace{.2 cm} n \longrightarrow \infty$$
	where the implied constant depends on $k$. 
\end{theorem*}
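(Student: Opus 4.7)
The plan is to produce the required lower bound by counting sufficiently many cuspidal automorphic representations of $\mathrm{GL}_2(\mathbb{A}_\mathbb{Q})$ of weight $\lambda_k$ whose symmetric cube transfers both land in the level structure $K_f^4(p^n)$ and remain cuspidal, and then to deduct the contribution of those whose symmetric cube fails to be cuspidal.

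First, I would use Proposition \ref{proposition-conductor-inequality} to fix an integer $m = m(n)$, of the order of $(n-1)/2$, with the property that every cuspidal representation $\pi$ of $\mathrm{GL}_2(\mathbb{A}_\mathbb{Q})$ of weight $\lambda_k$ whose conductor divides $p^m$ has its local component $\mathrm{sym}^3(\pi)_p$ carrying a non-zero $K_p^4(n)$-fixed vector (equivalently, the Jacquet--Piatetski-Shapiro--Shalika conductor of $\mathrm{sym}^3(\pi)_p$ is at most $p^n$). Using the newform dictionary of Raghuram \cite{Raghuram}, these $\pi$ are parameterised, up to finite ambiguity, by Hecke eigenforms in $S_k^{\mathrm{new}}(\Gamma_1(p^m))$, so Theorem \ref{dimension-estimate-new-forms} supplies at least $\gg_k p^{2m} \gg_k p^{n-1}$ candidate $\pi$'s.

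Next, I would discard the dihedral representations, since by Kim--Shahidi \cite{kim-shahidi} these are exactly the ones that fail to yield a cuspidal symmetric cube. By Proposition \ref{special-case}, the number of dihedral $\pi$ of weight $\lambda_k$ and conductor dividing $p^m$ is $\ll_{k,\epsilon} p^{m+1+\epsilon}$, which is $o(p^{2m})$ for any fixed $\epsilon \in (0,1)$; the non-dihedral count therefore still grows like $p^{2m}$, so the lower bound survives this subtraction.

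Finally, I would argue that the symmetric cube map $\pi \mapsto \mathrm{sym}^3(\pi)$ has fibres of uniformly bounded cardinality on the non-dihedral locus: two such representations with the same symmetric cube differ by a twist with a cubic character, and only finitely many such twists can respect the conductor bound at $p$ and at infinity. Dividing the count by this uniform bound yields $|E_k(p^n)| \gg_k p^{n-1}$. The principal obstacle I anticipate lies in extracting the right conductor inequality in Proposition \ref{proposition-conductor-inequality}: one must show that the ramified (principal series, Steinberg and supercuspidal) local components of $\mathrm{GL}_2$ of conductor $p^m$ behave sufficiently tamely under $\mathrm{sym}^3$ with respect to the Jacquet--Piatetski-Shapiro--Shalika conductor, so that the exponent $n-1$ is actually saturated, while also accounting for the central character conditions built into the definition of $K_p^4(n)$.
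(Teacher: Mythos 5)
Your proposal follows essentially the same route as the paper: count newforms via Theorem~\ref{dimension-estimate-new-forms}, use the conductor inequality (Proposition~\ref{proposition-conductor-inequality}) to keep $\mathrm{sym}^3(\pi)$ inside the level $p^n$, and subtract the dihedral contribution using Proposition~\ref{special-case}. The exponent bookkeeping also matches (the paper produces $|E_k(p^{2n})|\gg_k p^{2n}$ and then descends by one power to reach odd $n$; you pick $m\approx(n-1)/2$ directly, which amounts to the same thing).

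There are two points worth flagging. First, you correctly add a step the paper leaves tacit: the symmetric cube transfer on the non-dihedral locus has uniformly bounded fibres (non-dihedral $\pi_1,\pi_2$ with $\mathrm{sym}^3\pi_1\cong\mathrm{sym}^3\pi_2$ differ by a cubic twist), so counting eigenforms and counting elements of $E_k(p^n)$ only differ by a bounded factor. That observation is needed to pass from a lower bound on the number of admissible $\pi$ to a lower bound on $|E_k(p^n)|$, and it is a genuine improvement in rigour over the paper's writeup. Second, your application of Proposition~\ref{proposition-conductor-inequality} is stated too broadly: that proposition applies only when $\pi_p$ is a dihedral supercuspidal with trivial central character and $\zeta^6\neq 1$, yet your first paragraph invokes it for every $\pi$ of conductor dividing $p^m$, including ramified principal series and Steinberg twists. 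You do flag this as ``the principal obstacle'' at the end, but you do not resolve it. The paper sidesteps this by explicitly restricting to those $\pi$ whose $p$-component is supercuspidal (with the required extra hypotheses) and then relying on those forms still being plentiful enough among newforms of $p$-power level to carry the $p^{2m}$ asymptotic; you should adopt the same restriction rather than leave the other local types as an open problem, since extending the conductor inequality to all ramified types is unnecessary for the lower bound.
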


\section{Some calculations of conductors}\label{calculations of conductors}
 
\subsection{Conductor of a representation :}

Let $(\rho,\mathcal H)=(\bigotimes \limits_{p\leq \infty}\rho_p, \bigotimes \limits_{p\leq \infty}\mathcal H_p)$ be an irreducible automorphic representation of ${\rm GL}_m(\mathbb{A}_\Q)$. 

\begin{defn} For each finite prime $p$, the conductor of $\rho_p$ is defined to be  the smallest  integer $c(\rho_p)\geq 0$ such that the set $\mathcal H_p^{K_p^m({{c(\rho_p)}})}$ consisting of all $K_p^m({{c(\rho_p)}})$-fixed vectors of $\mathcal H_p$ is non-zero. The conductor of $\rho$ is defined as 
	$$N_\rho= \prod_{p<\infty} p^{c(\rho_p)} .$$ 
\end{defn}
	
	\subsection{Conductor of a local character:} Let $\mathcal{K}$ be a non-archimedean local field of characteristic zero. Let $\mathcal{O}_{\mathcal{K}}$, $\mathfrak{p}$, $\kappa$ be its ring of integers, the maximal ideal and the residue field respectively. 
		
	\begin{defn}
	The conductor $c(\zeta)$ of a character $\zeta$ of $\mathcal{K}^\times$ is the smallest positive integer $n$ such that ${\zeta}_{|_{1+\mathfrak{p}^n}} = 1 $. 
	\end{defn}
	
	Note that, for $i\geq0$, $c(\zeta^i)\leq c(\zeta)$.\smallskip

	\subsection{Conductor of symmetric cube of a representation:} 
	Suppose the residue field $\kappa$ is of characteristic $p > 0$ and let $q = | \kappa |$. 

	Let $W_\mathcal{K}$ be the Weil group of $\mathcal{K}$. Consider a finite dimensional vector space $V$ and a continuous homomorphism i.e. a representation of $W_\mathcal{K}$ given by $$\phi : W_\mathcal{K} \rightarrow {\rm GL}(V).$$

	Let $\omega$ be a character of $W_\mathcal{K}$ defined by
	\[g: x \mapsto x^{\omega(g)} \quad \forall~ x \in \overline{\kappa}.
	\]
	If there is a nilpotent endomorphism $N$ of $V$ which satisfies
	$$\phi(g)N\phi(g)^{-1}= \omega(g)N \quad \forall ~g \in W_\mathcal{K},$$ then the pair $(\phi,N)$ is said to be Weil-Deligne representation. See \cite{tate} for more details.\medskip
	
	By the local Langlands correspondence, a two-dimensional Weil-Deligne representation $(\phi,N)$ corresponds uniquely to an irreducible admissible representation $\pi$ of ${\rm GL}_2(\mathcal{K})$ and vice-versa. 
	We refer to $(\phi,N)$ as the local parameter of $\pi$. For a supercuspidal representation $\pi$ of ${\rm GL}_2(\mathcal{K})$, the local parameter of $\pi$ is an irreducible representation of $W_\mathcal{K}$ with $N=0$.\medskip
	
	For $p \geq 3$, every supercuspidal representation $\pi$ of ${\rm GL}_2(\mathcal{K})$ is a dihedral supercuspidal representation (see \cite{bump}) i.e., there exists a quadratic extension $\mathcal{F}$ over $\mathcal{K}$ and a character $\eta$ of $W_\mathcal{F}$	such that $\pi$ has a local parameter $\phi$ of the form
	$\phi= {\rm Ind}^{W_\mathcal{K}}_{W_\mathcal{F}} ~ \eta$
	with  $\eta \neq \eta^{\sigma},$  where $\sigma$ is a non-trivial element in $W_\mathcal{K} \setminus W_\mathcal{F}$~
	 (For $x\in W_\mathcal{F},~ \eta^{\sigma}$ is defined as $\eta^{\sigma}(x)=\eta(\sigma x \sigma^{-1})$). Note that $W_\mathcal F$ is an index-$2$ subgroup of $W_\mathcal K$. With respect to a suitable basis, $\phi$ has the following matrix form. $$  \phi(x)=\begin{bmatrix} \eta(x) & \\ & \eta^{\sigma}(x) \end{bmatrix},~ x \in W_\mathcal{F} \text{ and } \phi(\sigma)=\begin{bmatrix} & 1 \\ \eta(\sigma^2) & \end{bmatrix}. $$
	
	Let $\zeta$ be a character of $\mathcal{F}^\times$ corresponding to the character $\eta$ of $W_\mathcal{F}$ via the Artin isomorphism. We denote the dihedral supercuspidal representation $\pi$ by $w_{\mathcal{F},\zeta}$. If the conductor of $\zeta$ is $c(\zeta)$, then the conductor $c(\phi)$ of $\phi$ is given by
	$$ c(\phi)= {\rm dim}({\zeta})~v(d_{\mathcal{F}/\mathcal{K}}) + f_{\mathcal{F}/\mathcal{K}}~ c(\zeta),$$
	where $d_{\mathcal{F}/\mathcal{K}}$ is the discriminant of the field extension $\mathcal{F}/\mathcal{K}$ and $f_{\mathcal{F}/\mathcal{K}}$ is the residue class degree (see \cite{serre}, \cite{manami} for more details).\medskip

	Since $\zeta$ is a character, dim$(\zeta)=1$. If the characteristic of $\kappa$ is odd, we have
	\[
	c(\phi) = \begin{cases}
	2c(\zeta) & \text{ if } \mathcal{F} / \mathcal{K} \text{ is unramified} \\
	1+c(\zeta) & \text{ if } \mathcal{F} / \mathcal{K} \text{ is ramified}
	\end{cases} \]
	
	The central character of $w_{\mathcal{F},\zeta}$ of
	 ${\rm GL}_2(\mathcal{K})$ is $\zeta|_{\mathcal{K}^\times} \cdot \chi_{\mathcal{F}/\mathcal{K}}$
	  where $\chi_{\mathcal{F}/\mathcal{K}}$ is the quadratic character of $\mathcal{K}^\times$ 
	  associated to the quadratic extension $\mathcal{F}/\mathcal{K}$ 
	  such that ${\chi_{\mathcal{F}/\mathcal{K}}}_{|_{N_{\mathcal{F}/\mathcal{K}}(\mathcal{F}^{\times})}} = 1.$ \medskip 
	  
	 Define a character $\zeta^\sigma$ of $\mathcal{F}^\times$ by $\zeta^\sigma(x)=\zeta(\sigma(x))$. By Artin isomorphism, the conjugate character $\eta^\sigma$ of $W_{\mathcal{F}} $ corresponds to $\zeta^\sigma$. If the central character of $w_{\mathcal{F},\zeta}$ is trivial, then $\zeta^\sigma=\zeta^{-1}$, equivalently $\eta^\sigma=\eta^{-1}$.\medskip
	  
	Since $\text{sym}^3: \text{GL}_2(\mathbb{C}) \to \text{GL}_4(\mathbb{C})$ is defined as 
	$$\text{sym}^3 \left(\begin{bmatrix} a & b \\ c & d \end{bmatrix} \right) =  \begin{bmatrix} a^3 & a^2b & ab^2 & b^3 \\ 3a^2c & 2abc+a^2d & 2abd+b^2c & 3b^2d \\ 3ac^2 & 2acd+bc^2 & 2bcd+ad^2 & 3bd^2 \\ c^3 & c^2d & cd^2 & d^3 \end{bmatrix},$$ 
	we have,
	$$ \text{sym}^3(\phi)(x) = \begin{bmatrix} \eta^3(x) & & & \\ & \eta^2(x)\eta^\sigma(x) & & \\ & & \eta(x)(\eta^\sigma)^2(x) & \\ & & &(\eta^\sigma)^3(x) \end{bmatrix} \text{ } \text{ and}$$
	$$ \text{sym}^3(\phi)(\sigma) = \begin{bmatrix} & & & 1 \\ & & \eta^\sigma(\sigma^2) & \\ & (\eta^\sigma)^2(\sigma^2) & & \\ (\eta^\sigma)^3(\sigma^2) & & & \end{bmatrix} \text{ since }\eta^\sigma(\sigma^2)= \eta(\sigma^2).$$\medskip
	
	Hence $\text{sym}^3(\phi)$ is a sum of two $2$-dimensional representations i.e.,
	 $$\text{sym}^3(\phi)= {\rm Ind}^{W_\mathcal{K}}_{W_\mathcal{F}}(\eta^3) \oplus {\rm Ind}^{W_\mathcal{K}}_{W_\mathcal{F}} (\eta^2\eta^\sigma)$$
	
	Since we are considering supercuspidal representations with trivial central character, we have $\eta^\sigma =\eta^{-1}$ which gives,
	$$\text{sym}^3(\phi)= {\rm Ind}^{W_\mathcal{K}}_{W_\mathcal{F}}(\eta^3) \oplus {\rm Ind}^{W_\mathcal{K}}_{W_\mathcal{F}} (\eta).$$
	
	Further assume $\eta^6\neq1$. This gives $\eta^3 \neq (\eta^3)^\sigma$ that is, ${\rm Ind}^{W_\mathcal{K}}_{W_\mathcal{F}}(\eta^3)$ is irreducible. Hence with all these assumptions, we have,
	$$ c \left( \text{sym}^3(\phi) \right)= c \left({\rm Ind}^{W_\mathcal{K}}_{W_\mathcal{F}}(\eta^3) \right) + c \left({\rm Ind}^{W_\mathcal{K}}_{W_\mathcal{F}} (\eta)\right).$$\medskip
	Since $c \left({\rm Ind}^{W_\mathcal{K}}_{W_\mathcal{F}}(\eta^3) \right)= v(d_{\mathcal{F}/\mathcal{K}}) + f_{\mathcal{F}/\mathcal{K}}~ c(\zeta^3)$, we have the following lemma.
	
	\begin{lemma}\label{lemma-conductor-sym-cube}
	Let $\mathcal{K}$ be a non-archimedean local field with the characteristic of $ \kappa $ odd prime. 
	Suppose $\pi=w_{\mathcal{F},\zeta}$ is a supercuspidal representation of ${\rm GL}_2(\mathcal{K})$ with the trivial central character such that $\zeta^6\neq 1$ where $\mathcal{F} \text{ and } \zeta$ are as described above. Then the formula for the conductor of $\rm{sym}^3(\pi)$ can be described as follows.
	\[
	c(\rm{sym}^3\pi) = \begin{cases}
	2c(\zeta^3)+2c(\zeta) & \text{ if } \mathcal{F} / \mathcal{K} \text{ is unramified}, \\
	c(\zeta^3)+c(\zeta)+2 & \text{ if } \mathcal{F} / \mathcal{K} \text{ is ramified}.
	\end{cases} \]
	\end{lemma}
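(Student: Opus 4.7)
The plan is to build directly on the decomposition
$$\mathrm{sym}^3(\phi) = \mathrm{Ind}_{W_\mathcal{F}}^{W_\mathcal{K}}(\eta^3) \oplus \mathrm{Ind}_{W_\mathcal{F}}^{W_\mathcal{K}}(\eta)$$
established in the discussion preceding the lemma, and then combine additivity of the conductor under direct sum with the induction formula $c(\mathrm{Ind}_{W_\mathcal{F}}^{W_\mathcal{K}}(\eta^i)) = v(d_{\mathcal{F}/\mathcal{K}}) + f_{\mathcal{F}/\mathcal{K}}\, c(\zeta^i)$ recalled from the cited works of Serre and the source \cite{manami}.

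First I would verify that the two induced representations appearing in the decomposition are both irreducible, so that the induction formula applies to each separately. Irreducibility of $\mathrm{Ind}_{W_\mathcal{F}}^{W_\mathcal{K}}(\eta)$ is already packaged into the hypothesis that $\pi = w_{\mathcal{F},\zeta}$ is a dihedral supercuspidal, which requires $\eta \neq \eta^\sigma$. For the other summand, one needs $\eta^3 \neq (\eta^\sigma)^3$; using the trivial central character assumption (which yields $\eta^\sigma = \eta^{-1}$), this inequality is equivalent to $\eta^6 \neq 1$, i.e.\ to $\zeta^6 \neq 1$ via the Artin isomorphism, which is precisely the standing hypothesis. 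This is the one step that is not purely mechanical, and it is the only place where the assumption $\zeta^6 \neq 1$ is used.

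Once both summands are known to be irreducible, additivity gives
$$c(\mathrm{sym}^3(\phi)) = c(\mathrm{Ind}_{W_\mathcal{F}}^{W_\mathcal{K}}(\eta^3)) + c(\mathrm{Ind}_{W_\mathcal{F}}^{W_\mathcal{K}}(\eta)) = 2v(d_{\mathcal{F}/\mathcal{K}}) + f_{\mathcal{F}/\mathcal{K}}\bigl(c(\zeta^3) + c(\zeta)\bigr).$$
From here the proof splits into the two cases according to the ramification of $\mathcal{F}/\mathcal{K}$. If $\mathcal{F}/\mathcal{K}$ is unramified, then $v(d_{\mathcal{F}/\mathcal{K}}) = 0$ and $f_{\mathcal{F}/\mathcal{K}} = 2$, which yields $c(\mathrm{sym}^3\pi) = 2c(\zeta^3) + 2c(\zeta)$. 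If $\mathcal{F}/\mathcal{K}$ is ramified, then because the residue characteristic is odd the extension is tamely ramified, giving $v(d_{\mathcal{F}/\mathcal{K}}) = 1$ and $f_{\mathcal{F}/\mathcal{K}} = 1$, whence $c(\mathrm{sym}^3\pi) = c(\zeta^3) + c(\zeta) + 2$.

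The main obstacle, and really the only non-bookkeeping issue, is the irreducibility verification above; once that is in place the result is essentially a substitution into the formula for the conductor of an induced character already laid out in the preceding subsection.
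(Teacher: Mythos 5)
Your proof is correct and follows essentially the same route as the paper: both rely on the decomposition $\mathrm{sym}^3(\phi)=\mathrm{Ind}_{W_\mathcal{F}}^{W_\mathcal{K}}(\eta^3)\oplus\mathrm{Ind}_{W_\mathcal{F}}^{W_\mathcal{K}}(\eta)$ (valid once $\eta^\sigma=\eta^{-1}$), additivity of the Artin conductor, and the induction formula $c(\mathrm{Ind}\,\eta^i)=v(d_{\mathcal{F}/\mathcal{K}})+f_{\mathcal{F}/\mathcal{K}}\,c(\zeta^i)$, with the same final split into unramified (so $v(d)=0$, $f=2$) and tamely ramified (so $v(d)=1$, $f=1$) cases. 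Your extra verification that $\mathrm{Ind}(\eta)$ is also irreducible is a harmless addition; the paper only records irreducibility of the $\eta^3$-summand, which is the one genuinely needing the hypothesis $\zeta^6\neq1$.
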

	
	Consider $\mathcal{K}= \Q_p$. Then the relation between $c(\pi_p)$ and $c(\text{sym}^3\pi_p)$ is captured by the following proposition.\smallskip
	 	
	\begin{proposition}\label{proposition-conductor-inequality}
		Let $p \geq 3$ be a finite prime. Suppose $\pi_p=w_{\mathcal{F},\zeta}$ is a dihedral supercuspidal representation of $ \rm{GL}_2(\Q_p)$ \rm{(as above)}. Further assume that the central character of $w_{\mathcal{F},\zeta}$ is trivial and $\zeta^6\neq 1$. Then $c(\pi_p) \leq c(\rm{sym}^3\pi_p) \leq 2c(\pi_p)$.
	\end{proposition}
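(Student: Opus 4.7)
The plan is to reduce the inequality to two case checks (unramified vs. ramified quadratic extension $\mathcal{F}/\mathbb{Q}_p$) by directly comparing the conductor formulas already assembled in the preceding discussion. Since the central character of $w_{\mathcal{F},\zeta}$ is trivial and $\zeta^6 \neq 1$, Lemma \ref{lemma-conductor-sym-cube} gives a clean closed-form expression for $c(\mathrm{sym}^3 \pi_p)$ in each case, and the conductor formula stated just before that lemma gives $c(\pi_p)$ in each case. So the whole question is a side-by-side comparison of four numbers, and the only nontrivial input needed is the general inequality $c(\zeta^i) \leq c(\zeta)$ for $i \geq 0$ recorded in the preliminaries.

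First I would recall the four relevant formulas. For the local parameter $\phi = \mathrm{Ind}_{W_\mathcal{F}}^{W_{\mathbb{Q}_p}} \eta$, one has $c(\pi_p) = 2 c(\zeta)$ in the unramified case and $c(\pi_p) = 1 + c(\zeta)$ in the ramified case; and by Lemma \ref{lemma-conductor-sym-cube}, $c(\mathrm{sym}^3 \pi_p)$ equals $2 c(\zeta^3) + 2 c(\zeta)$ in the unramified case and $c(\zeta^3) + c(\zeta) + 2$ in the ramified case.

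Next I would check the left inequality $c(\pi_p) \leq c(\mathrm{sym}^3 \pi_p)$. In the unramified case this reads $2 c(\zeta) \leq 2 c(\zeta^3) + 2 c(\zeta)$, which is immediate from $c(\zeta^3) \geq 0$. In the ramified case it reads $1 + c(\zeta) \leq c(\zeta^3) + c(\zeta) + 2$, again immediate from $c(\zeta^3) \geq 0$. So the lower bound is essentially free.

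For the upper bound $c(\mathrm{sym}^3 \pi_p) \leq 2 c(\pi_p)$ I would invoke the inequality $c(\zeta^3) \leq c(\zeta)$. In the unramified case the claim becomes $2 c(\zeta^3) + 2 c(\zeta) \leq 4 c(\zeta)$, i.e. $c(\zeta^3) \leq c(\zeta)$. In the ramified case it becomes $c(\zeta^3) + c(\zeta) + 2 \leq 2 + 2 c(\zeta)$, which again rearranges to $c(\zeta^3) \leq c(\zeta)$. Both are covered by the general bound on conductors of powers of a character. There is no genuine obstacle here; the only thing one has to be careful about is that supercuspidality forces $c(\zeta) \geq 1$ (so that the representation-theoretic side of the formulas is nondegenerate), but since the inequalities above hold for every $c(\zeta) \geq 0$, this plays no role in the argument itself.
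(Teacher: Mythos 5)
Your proposal is correct and follows essentially the same approach as the paper: it invokes Lemma \ref{lemma-conductor-sym-cube} for the two formulas for $c(\mathrm{sym}^3\pi_p)$, splits into the unramified and ramified cases, and reduces both inequalities to $c(\zeta^3)\geq 0$ (lower bound) and $c(\zeta^3)\leq c(\zeta)$ (upper bound). This matches the paper's proof step for step.
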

	
	\begin{proof}
		
	 We use Lemma \ref{lemma-conductor-sym-cube}.	If $\mathcal{F}/\Q_p$ is unramified, then 
		\[ 
		\begin{split} c(\text{sym}^3\pi_p) & = 2c(\zeta^3)+2c(\zeta), \\
		c(\pi_p)& =2c(\zeta).
		\end{split}
		\]
		
		So we have $c(\pi_p) \leq c(\text{sym}^3\pi_p).$ \medskip
		
		Since $2c(\zeta^3)\leq 2c(\zeta)$, we conclude that
		$$c(\text{sym}^3\pi_p)\leq 4c(\zeta)=2c(\pi_p).$$ 
		
		On the other hand if $\mathcal{F}/\Q_p$ is ramified, then 
		\[ 
		\begin{split}
		c(\text{sym}^3\pi_p) & =c(\zeta^3)+c(\zeta)+2, \\ 
		c(\pi_p) & = c(\zeta)+1.
		\end{split}
		\]
		 
So we have $c(\pi_p) \leq c(\text{sym}^3\pi_p).$
Since $c(\zeta^3)\leq c(\zeta)$, it follows that $c(\text{sym}^3\pi_p)\leq 2c(\pi_p)$. \medskip
		 
		 We conclude that the desired inequality $c(\pi_p) \leq c(\text{sym}^3\pi_p) \leq 2 c(\pi_p)$ holds in all cases.    \medskip
\end{proof}

\section{Proof of Theorem \ref{main-theorem}}\label{proof-of-main-theorem}
	
	\subsection{The symmetric cube transfer from ${\rm GL}_2$  and cohomology of ${\rm GL}_4$}
	
	Let $\rho$ be a cuspidal automorphic representation of ${\rm GL}_m/\Q$ with $\rho_\infty$ and $\rho_f$ be its finite and infinite parts respectively. Let $\mu$ be a dominant integral weight corresponding to the standard Borel subgroup and $\mathcal M_{\mu,\mathbb{C}}$ be the underlying vector space of the finite dimensional irreducible highest weight representation corresponding to $\mu$. For a given level structure $K_f^m \subset  {\rm GL}_m(\mathbb{A}_f)$, we denote by ${\rho}^{K_f^m}_f$, the space of $K_f^m$-fixed vectors of $\rho_f$. We write $\rho \in \text{Coh}({\rm GL}_m, \mu, K_f^m)$ if  the relative Lie algebra cohomology with coefficients in $\mathcal M_{\mu,\mathbb{C}}$ is non zero in some degree i.e.,
	$$ H^*(\mathfrak{gl}_m, \mathbb{R}_+^\times \cdot {\rm SO}_m(\mathbb{R}),\rho_\infty \bigotimes \mathcal M_{\mu,\mathbb{C}})\bigotimes \rho_f^{K_f^m} \neq 0 .$$
	
	For more details, the reader is referred to \cite{bhagwat}.\medskip
	
	The following theorem by Kim and Shahidi \cite{kim-shahidi} gives the sufficient condition for the symmetric cube of an automorphic representation of ${\rm GL}_2$ being cuspidal.
	
\begin{theorem}\label{kim-shahidi-theorem}
Let $\mathbb{F}$ be a number field and $\pi$ be a automorphic cuspidal representation of $\mathrm{GL}_2(\mathbb{A}_\mathbb{F})$. Then $\mathrm{sym}^3(\pi)$ is automorphic representation of $\mathrm{GL}_4(\mathbb{A}_\mathbb{F})$. Further, $\mathrm{sym}^3(\pi)$ is cuspidal unless $\pi$ is dihedral or tetrahedral type. In particular, if $\mathbb{F}=\Q$, and $\pi$ is the automorphic cuspidal representation attached to a non-dihedral holomorphic form of weight $\geq2$, then $\mathrm{sym}^3(\pi)$ is cuspidal. 
\end{theorem}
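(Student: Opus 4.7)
The plan is to follow the Kim--Shahidi strategy, which combines the Cogdell--Piatetski-Shapiro converse theorem for $\mathrm{GL}_4$ with the Langlands--Shahidi method for $L$-functions, taking the Gelbart--Jacquet symmetric square lift from $\mathrm{GL}_2$ to $\mathrm{GL}_3$ as an essential input. At each place $v$ of $\mathbb{F}$, I would define a local representation $\Pi_v$ by applying the local Langlands correspondence to the Weil--Deligne parameter $\mathrm{sym}^3 \circ \phi_v$, where $\phi_v$ is the parameter attached to $\pi_v$. The restricted tensor product $\Pi := \bigotimes_v \Pi_v$ is then a well-defined irreducible admissible representation of $\mathrm{GL}_4(\mathbb{A}_\mathbb{F})$, and the core problem is to prove that $\Pi$ is automorphic.

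To apply the converse theorem one must verify that for every cuspidal automorphic representation $\tau$ of $\mathrm{GL}_n(\mathbb{A}_\mathbb{F})$ with $1 \le n \le 3$, the twisted $L$-function $L(s, \Pi \times \tau)$ is entire, bounded in vertical strips of finite width, and satisfies the expected functional equation. The idea is to rewrite each such $L$-function in terms of Rankin--Selberg and Langlands--Shahidi $L$-functions built from the already available transfers $\pi$ and $\mathrm{sym}^2(\pi)$; the relevant algebraic input is the Clebsch--Gordan decomposition
\[
\mathrm{sym}^3 \otimes \mathrm{sym}^2 \;\cong\; \mathrm{sym}^5 \,\oplus\, \bigl(\mathrm{sym}^3 \otimes \det\bigr) \,\oplus\, \bigl(\mathrm{sym}^1 \otimes \det{}^{2}\bigr),
\]
together with the analogous decomposition of $\mathrm{sym}^3 \otimes \mathrm{std}$. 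These translate into Euler-product identities realising $L(s, \Pi \times \tau)$ as a ratio or product of $L$-functions coming from Eisenstein series on larger reductive groups (including exceptional ones) in which $\mathrm{GL}_2$ sits as a Levi factor, and the Langlands--Shahidi machinery then supplies meromorphic continuation and functional equations for them.

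The principal obstacle, and the technical heart of Kim--Shahidi, is upgrading meromorphic continuation to genuine holomorphy and boundedness in vertical strips for these auxiliary $L$-functions: residual poles from non-cuspidal Eisenstein series must be ruled out, which requires a case-by-case analysis of possible poles via Arthur's truncation and appeals to previously established functoriality, notably Kim's exterior square lift for $\mathrm{GL}_4$. Once this is in hand, one introduces a highly ramified auxiliary idele class character twist to kill any surviving poles, applies the converse theorem to produce an automorphic $\Pi'$ on $\mathrm{GL}_4(\mathbb{A}_\mathbb{F})$ matching $\Pi$ at almost every place, and upgrades this via strong multiplicity one to $\Pi' \cong \Pi$. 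For the cuspidality clause, $\Pi$ is non-cuspidal precisely when $L(s, \Pi \times \widetilde{\Pi})$ has a pole at $s=1$, and decomposing $\mathrm{sym}^3 \otimes \overline{\mathrm{sym}^3}$ into symmetric power $L$-functions of $\pi$ shows this forces $\pi$ to be dihedral or tetrahedral. Finally, when $\mathbb{F} = \mathbb{Q}$ and $\pi$ is attached to a non-dihedral holomorphic form of weight $\ge 2$, the archimedean parameter $\pi_\infty$ is a discrete series of regular infinitesimal character, which excludes the tetrahedral possibility and yields the stated cuspidality.
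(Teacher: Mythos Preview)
The paper does not prove this theorem at all: it is quoted as a known result of Kim and Shahidi and simply cited to \cite{kim-shahidi}. There is therefore no proof in the paper to compare your proposal against.

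That said, your outline, while a reasonable sketch of a converse-theorem strategy, is not quite the route Kim and Shahidi actually take. They do not apply the converse theorem directly on $\mathrm{GL}_4$; rather, the main theorem of \cite{kim-shahidi} is the functorial tensor-product lift $\mathrm{GL}_2 \times \mathrm{GL}_3 \to \mathrm{GL}_6$, and the automorphy of $\mathrm{sym}^3(\pi)$ is then deduced from the isobaric decomposition $\pi \boxtimes \mathrm{sym}^2(\pi) \cong \mathrm{sym}^3(\pi) \boxplus (\pi \otimes \omega_\pi)$ on $\mathrm{GL}_6$. There is also a slip in your cuspidality paragraph: $L(s,\Pi \times \widetilde{\Pi})$ \emph{always} has a pole at $s=1$ when $\Pi$ is automorphic; non-cuspidality is detected by the order of that pole being larger than one, or equivalently by a pole of one of the factor $L$-functions in the decomposition of $\mathrm{sym}^3 \otimes \widetilde{\mathrm{sym}^3}$.
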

	
	A cusp form $\pi$ of ${\rm GL}_2(\mathbb{A}_\mathbb{F})$ is called dihedral iff $\pi \cong \pi \bigotimes \xi$ for some character $\xi$ such that $\xi^2=1$. In this case there is a gr\"o{\ss}encharacter $\chi$ of the quadratic extension $\mathbb{E}/\mathbb{F}$ associated to $\xi$ such that $\pi$ is obtained by automorphic induction of $\chi$. Hence we conclude from Theorem \ref{kim-shahidi-theorem} that if the automorphic cuspidal representation $\pi$ of ${\rm GL}_2(\mathbb{A}_\Q)$ is not obtained by automorphic induction from a gr\"o{\ss}encharacter, then $\text{sym}^3{\pi}$ is cuspidal.\medskip

	Now we discuss the relation between the symmetric cube transfer and cuspidal cohomology of ${\rm GL}_4$.
		Let $p\geq 3$ be a prime and $\pi=\bigotimes \limits_q \pi_q$ be a cohomological cuspidal automorphic representation of $\mathrm{GL}_2(\mathbb{A}_\Q)$ such that $\pi_p$ is supercuspidal. Assume that the highest weight corresponding to $\pi_\infty$ is $\lambda_k:=\left(\dfrac{k}{2}-1,~ 1-\dfrac{k}{2}\right)$ where $k\geq2$.\smallskip
		
		Let $\Pi=\mathrm{sym}^3(\pi)$ be the representation of $\mathrm{GL}_4(\mathbb{A}_\Q)$ obtained by symmetric cube transfer. Let $c(\Pi)$ be its conductor and $\nu_k = \left( 3 \left(\dfrac{k}{2}-1\right),~ \dfrac{k}{2}-1,~ 1 - \dfrac{k}{2},~   3 \left( 1  - \dfrac{k}{2} \right) \right)$ be the highest weight corresponding to $\Pi_\infty$. We conclude from Raghuram's work 
		(see Theorem 3.2, \cite{Raghuram}), that $\Pi$ is cohomological w.r.t. the weight $\nu_k$. We summarise this discussion as a lemma here. \medskip
		
		
		\begin{lemma}\label{cohomological-sym-cube}
		For an odd prime $p$ and a cohomological cuspidal automorphic representation $\pi$  of $\mathrm{GL}_2(\mathbb{A}_\Q)$ as above, we have
		\begin{align*}
		&\Pi \in {\mathrm{Coh}(\mathrm{Gl}_4/\Q,\nu_k,K_f^4(c(\Pi))}, \\  
		 \text{i.e.}~&H^{\bullet}(\mathfrak{gl}_4, \mathbb{R}_+^\times \cdot {\rm SO}_4(\mathbb{R}); \rho_\infty \bigotimes M_{\nu_{k},\mathbb{C}})\bigotimes \Pi_f^{K_f^4(c(\Pi))} \neq 0 .
		\end{align*}
	\end{lemma}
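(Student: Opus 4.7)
The plan is to verify the two factors in the definition of cohomological separately, namely the archimedean relative Lie algebra cohomology and the space of $K_f^4(c(\Pi))$-fixed vectors in $\Pi_f$, and then tensor them. The main input is the archimedean weight transfer from \cite{Raghuram}; the finite-level part is immediate from the definition of the conductor once $\Pi$ is known to be automorphic.

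First, by Theorem~\ref{kim-shahidi-theorem} of Kim--Shahidi, $\Pi = \mathrm{sym}^3(\pi)$ is an automorphic representation of $\mathrm{GL}_4(\mathbb{A}_\Q)$, so it has a well-defined conductor $c(\Pi) = \prod_{q} q^{c(\Pi_q)}$. By the very definition of the local conductor $c(\Pi_q)$ recalled in Section~\ref{calculations of conductors}, the space $\Pi_q^{K_q^4(c(\Pi_q))}$ of $K_q^4(c(\Pi_q))$-fixed vectors in $\Pi_q$ is non-zero at every finite prime $q$ (and is the full space of spherical vectors at primes $q$ of good reduction, where $c(\Pi_q)=0$). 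Taking the restricted tensor product over all finite places therefore gives $\Pi_f^{K_f^4(c(\Pi))} \neq 0$.

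Next, since $\pi$ is cohomological of weight $\lambda_k = \bigl(\tfrac{k}{2}-1,\,1-\tfrac{k}{2}\bigr)$, its archimedean component $\pi_\infty$ is the cohomological discrete series whose Langlands parameter $\phi_{\pi_\infty} : W_\R \to \mathrm{GL}_2(\mathbb{C})$ restricts on $\mathbb{C}^\times \subset W_\R$ to $z \mapsto \mathrm{diag}\bigl((z/\bar z)^{(k-1)/2},\,(z/\bar z)^{-(k-1)/2}\bigr)$. Composing with $\mathrm{sym}^3$ produces the archimedean parameter of $\Pi_\infty = \mathrm{sym}^3(\pi_\infty)$; restricted to $\mathbb{C}^\times$ it equals $z \mapsto \mathrm{diag}\bigl((z/\bar z)^{3(k-1)/2},(z/\bar z)^{(k-1)/2},(z/\bar z)^{-(k-1)/2},(z/\bar z)^{-3(k-1)/2}\bigr)$. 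Subtracting the half-sum of positive roots $\rho_{\mathrm{GL}_4} = (3/2,\,1/2,\,-1/2,\,-3/2)$ from this list of exponents returns exactly the weight $\nu_k = \bigl(3(\tfrac{k}{2}-1),\,\tfrac{k}{2}-1,\,1-\tfrac{k}{2},\,3(1-\tfrac{k}{2})\bigr)$. This archimedean weight-matching is the content of Theorem~3.2 of \cite{Raghuram} specialised to our setting, and in particular it yields
\[
H^{\bullet}\bigl(\mathfrak{gl}_4,\,\R_+^\times \cdot \mathrm{SO}_4(\R);\,\Pi_\infty \otimes \mathcal M_{\nu_k,\mathbb{C}}\bigr) \neq 0.
\]

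Tensoring the two non-vanishing statements produces the assertion $\Pi \in \mathrm{Coh}\bigl(\mathrm{GL}_4/\Q,\,\nu_k,\,K_f^4(c(\Pi))\bigr)$. The only substantive step is the archimedean weight identification above, which reduces to the explicit matrix form of $\mathrm{sym}^3$ composed with the discrete-series parameter of $\pi_\infty$ and then a bookkeeping with $\rho_{\mathrm{GL}_4}$; the remaining ingredients are either Kim--Shahidi or a direct unwinding of the definition of the conductor.
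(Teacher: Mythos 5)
Your proposal is correct and follows essentially the same route as the paper: the archimedean non-vanishing is deduced from Raghuram's Theorem 3.2 applied to the symmetric-cube transfer, and the non-vanishing of $\Pi_f^{K_f^4(c(\Pi))}$ is immediate from the definition of the conductor once Kim--Shahidi guarantees $\Pi$ is automorphic. The explicit Langlands-parameter and $\rho$-shift bookkeeping you supply (which checks out, yielding exactly $\nu_k$) is left implicit in the paper, so your write-up is slightly more detailed but not a different argument.
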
 \medskip
	
	\subsection{Proof of Theorem \ref{main-theorem} :}
	Now we give a proof of Theorem \ref{main-theorem} using all the results mentioned before.
	
	\begin{proof}
		Let $p \geq 3$ be a prime and  $\pi=\bigotimes \limits_{q\leq \infty}\pi_q$ be an irreducible cuspidal automorphic representation of ${\rm GL}_2(\mathbb{A}_\Q)$ corresponding to the subgroup $K_f^2(p^n), n\geq1$. From Theorem ~\ref{kim-shahidi-theorem}, we conclude that either $\text{sym}^3(\pi)\in \bigcup \limits_{j \geq 1}E_k(p^j)$ or $\pi$ is obtained by automorphic induction from a gr\"o{\ss}encharacter. Also we have, $\pi_p$ corresponds to a unique newform in $S_k^\text{new}(\Gamma_1(p^i))$ for some $1\leq i\leq n$ with some character. \smallskip
		In order to get a lower bound for $|E_k(N)|$, it suffices to consider only those $\pi$ for which, $\pi_p$ is supercuspidal representation  and $sym^3(\pi)$ contributes to $\bigcup \limits _{j\geq1}E_k(p^j)$.  \smallskip

		If $c(\pi_p)$ is the conductor of the supercuspidal representation $\pi_p$ and it corresponds to a newform in $S_k^\text{new}(\Gamma_1(p^i))$, then $c(\pi_p)=i$. Using Proposition \ref{proposition-conductor-inequality}, we conclude that $\pi_p$ can contribute to $\bigcup \limits_{j\geq1}E_k(p^j)$ only if 
		$c(\pi_p)\leq j \leq 2c(\pi_p).$\medskip
	
	Since $1\leq c(\pi_p) \leq n \implies 1\leq j \leq 2n $, we conclude that every supercuspidal representation $w_{\mathcal{F},\zeta}$ such that $\zeta^6\neq 1$ within $\bigoplus \limits_{1\leq i\leq n}S_k^\text{new}(\Gamma_1(p^i)) \backslash C_k(p^{n})$ with trivial central character is in the set $E_k(p^{2n}) \backslash E_k(p) $. \medskip

		For a prime $p\geq 3$, we have $$\frac{\text{dim}_\mathbb{C}~ S_k^\text{new}(\Gamma_1(p^i))}{p^{2i}} = \left(\dfrac{k-1}{4\pi^2} \right) \left(1-\dfrac{1}{p^2}\right)^2 + o(1) \quad \text{as}~ i~ \longrightarrow \infty.$$\smallskip
	
	Observe that  $1-\dfrac{1}{p^2} > \dfrac{3}{4}$ as $p\geq 3$. Hence, 
		$$ \sum_{1 \leq i \leq n} {\rm dim}_\mathbb{C} S_k^\text{new}(\Gamma_1(p^i))~ \gg_k ~p^{2n}.$$\medskip
		
		We conclude from Theorem \ref{chaitanya-upper-bound} and Proposition  \ref{special-case} that $|C_k(p^n)| \ll_k p^{n+2}$. Since $|C_k(p^n)|$ is negligible compared to $p^{2n}$, it gives,
		 $$|E_k(p^{2n})| ~\gg_k~ p^{2n} \text{ }\text{ as } n\longrightarrow \infty .$$ \smallskip
		  	 
	Since $K_f^m(p^{n-1}) \supseteq K_f^m(p^n)$, we have, $ E_k(p^{n-1}) \subseteq E_k(p^n)$. Hence when $n$ is odd i.e. $n-1$ is even, $$|E_k(p^{n})| \geq |E_k(p^{n-1})| ~\gg_k~ p^{n-1}~ \text{ as } n \longrightarrow \infty.$$ \smallskip
	
	Combining the two cases, we have 
	 $$|E_k(p^n)| ~\gg_k~ p^{n-1}~ \text{ as } n \longrightarrow \infty .$$	 
		 
	\end{proof} 
	
	From Theorem  \ref{dimension-estimate-cusp-forms}, we get the following estimate on the upper bound of $|E_k(p^{n})|$. \smallskip

	\begin{corollary}
		For a prime $p \geq 3$,
	$$|E_k(p^{n})| ~\ll_k~ p^{2n} \text{ } \text{ as } n \longrightarrow \infty .$$
	\end{corollary}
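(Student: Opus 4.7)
The plan is to reduce the corollary to counting the GL$_2$ pre-images under the symmetric cube transfer and then invoke Theorem \ref{dimension-estimate-cusp-forms}. Every $\Pi \in E_k(p^n)$ is of the form $\text{sym}^3(\pi)$ for some cuspidal automorphic representation $\pi$ of ${\rm GL}_2(\mathbb{A}_\Q)$ of weight $\lambda_k$, so it suffices to bound the number of such $\pi$'s that can occur, up to the fiber size of the transfer.

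First, I would show that any such $\pi$ has level dividing $p^n$. Since $\Pi$ has a $K_f^4(p^n)$-fixed vector, its conductor divides $p^n$, i.e.\ $c(\Pi_q) = 0$ for every prime $q \neq p$ and $c(\Pi_p) \leq n$. At primes $q \neq p$, the fact that $\text{sym}^3(\pi_q)$ is unramified forces $\pi_q$ itself to be unramified (the symmetric cube of a ramified local representation is never unramified, as one sees directly from the possible local parameters). At $p$, the left-hand inequality in Proposition \ref{proposition-conductor-inequality} gives $c(\pi_p) \leq c(\text{sym}^3 \pi_p) \leq n$ in the supercuspidal case with trivial central character and $\zeta^6 \neq 1$; the remaining local types (principal series, Steinberg, and the excluded supercuspidals) each satisfy an analogous elementary bound, so $c(\pi_p) \leq n$ in all cases. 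Hence the level of $\pi$ divides $p^n$.

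Second, I would observe that the symmetric cube transfer has fibers of bounded size independent of $n$: if $\text{sym}^3(\pi_1) \cong \text{sym}^3(\pi_2)$, then $\pi_1$ and $\pi_2$ differ by a twist by a character of order dividing $3$, and there are only finitely many such characters at a given level. Consequently,
$$|E_k(p^n)| \;\ll\; \#\{\pi \text{ cuspidal on GL}_2(\mathbb A_\Q) \text{ of weight } \lambda_k,\ \text{level}\mid p^n\}.$$

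Third, the right-hand side is bounded by $\dim_\mathbb{C} S_k(\Gamma_1(p^n))$, since each cuspidal automorphic representation of weight $\lambda_k$ with level dividing $p^n$ contributes an eigenspace inside this space of classical cusp forms (summed over nebentypus characters modulo $p^n$). Applying Theorem \ref{dimension-estimate-cusp-forms} with $N = p^n$ gives
$$\dim_\mathbb{C} S_k(\Gamma_1(p^n)) \;\sim_k\; p^{2n},$$
and combining all three steps yields $|E_k(p^n)| \ll_k p^{2n}$ as desired.

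The main obstacle is the first step: one must handle every local type of $\pi_p$ uniformly to guarantee $c(\pi_p) \leq n$ from $c(\Pi_p) \leq n$. However, since only an upper bound is sought, this is not sharp and is standard in each case: for principal series and Steinberg representations the conductor of $\text{sym}^3(\pi_p)$ is easily computed from the local parameter and dominates $c(\pi_p)$, while the excluded supercuspidal cases (small order of $\zeta$) contribute a negligible number of exceptions that do not affect the asymptotic.
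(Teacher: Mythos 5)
Your overall route is the one the paper itself intends: the paper deduces this corollary by simply citing Theorem \ref{dimension-estimate-cusp-forms}, i.e.\ by bounding $|E_k(p^n)|$ through the count of weight-$k$ classical cusp forms of $p$-power level, so your third step is exactly the intended argument. (Your second step is not even needed for an upper bound: if every $\Pi\in E_k(p^n)$ admits at least one source of controlled level, then $|E_k(p^n)|$ is at most the number of such sources, regardless of fiber sizes; fiber control only matters for lower bounds.)

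The genuine problem is in your first step. The claim that ``the symmetric cube of a ramified local representation is never unramified'' is false: if $q\equiv 1\pmod 3$ (or $q=3$) and $\chi$ is a ramified character of $\Q_q^\times$ with $\chi^3$ unramified, then $\pi_q=\chi\mu_1\boxplus\chi\mu_2$ with $\mu_1,\mu_2$ unramified is ramified, while $\mathrm{sym}^3(\pi_q)$ has parameter $\chi^3\mu_1^3\oplus\chi^3\mu_1^2\mu_2\oplus\chi^3\mu_1\mu_2^2\oplus\chi^3\mu_2^3$, which is unramified. Concretely, if $f$ has level one and $\chi$ is a cubic Dirichlet character modulo such a $q$, then $\pi(f)\otimes\chi$ is a source of an element of $E_k(p^n)$ whose level is not a power of $p$; so your conclusion ``the level of $\pi$ divides $p^n$'' fails for arbitrary sources, and since the definition of $E_k(p^n)$ places no restriction on the source's level, this is precisely the point requiring an argument. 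For the same reason $c(\pi_p)\le c(\mathrm{sym}^3\pi_p)$ does not hold ``in all cases'' (the same example at $p\equiv1\pmod 3$ gives $c(\mathrm{sym}^3\pi_p)=0<c(\pi_p)$); Proposition \ref{proposition-conductor-inequality} genuinely uses $\zeta^6\neq1$ and the trivial central character. The gap is repairable because these failures occur only up to twists by characters of order $3$, whose conductors are absolutely bounded (conductor $1$ at $q\equiv1\pmod 3$, at most $2$ at $q=3$): either replace $\pi$ by $\pi\otimes\chi$ for a suitable global cubic Dirichlet character (which leaves $\mathrm{sym}^3\pi$ unchanged and removes the spurious ramification away from $p$), or show that at each place the conductor can drop under $\mathrm{sym}^3$ only by a bounded amount (this also covers the supercuspidal cases with $\zeta^6=1$ or nontrivial central character, which you cannot simply dismiss as ``negligible exceptions'' in an upper-bound argument). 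One then gets a source of level dividing $p^{n+O(1)}$, and $\dim_{\mathbb{C}}S_k(\Gamma_1(p^{n+O(1)}))\sim_k p^{2n}$ still yields $|E_k(p^n)|\ll_k p^{2n}$.
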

	\smallskip
	
	
	\subsection{Further generalisations} As a part of ongoing work, we aim to answer the analogous question for the case of a number field $\mathbb{F}$, i.e. how much of cuspidal cohomology is obtained for ${\rm GL}_3$ and ${\rm GL}_4$ from ${\rm GL}_2$ by symmetric square transfer and symmetric cube transfer, respectively. 
		For $n \geq 5$, the question about estimating the cuspidal cohomology of ${\rm GL}_n$ in terms of symmetric power transfer from a lower rank group is interesting but seems to be more complicated as the results of Kim and Shahidi do not directly generalise to ${\rm GL}_n$ for $n \geq 5$.\bigskip

\end{document}